\newcommand{\RR}{{\mathbb{R}}}
\newcommand{\TT}{{\mathbb{T}}}
\newcommand{\NN}{{\mathbb{N}}}
\newcommand{\ZZ}{{\mathbb{Z}}}
\newcommand{\CC}{{\mathbb{C}}}
\newcommand{\fb}{{\vec{f}}}
\newcommand{\tb}{{\vec{t}}}
\newcommand{\eb}{{\vec{e}}}
\newcommand{\pb}{{\vec{p}}}
\newcommand{\lb}{{\vec{\ell}}}
\newcommand{\rb}{{\vec{r}}}
\newcommand{\cb}{{\vec{c}}}
\newcommand{\vb}{{\vec{v}}}
\newcommand{\xb}{{\vec{x}}}
\newcommand{\yb}{{\vec{y}}}
\newcommand{\Mb}{{\vec{M}}}
\newcommand{\Lb}{{\vec{L}}}
\newcommand{\Eb}{{\vec{E}}}
\newcommand{\Fb}{{\vec{F}}}
\newcommand{\Vb}{{\vec{V}}}
\newcommand{\Pb}{{\vec{P}}}
\newcommand{\Kb}{{\vec{K}}}
\newcommand{\Xb}{{\vec{X}}}
\newcommand{\Qb}{{\vec{Q}}}
\newcommand{\Rb}{{\vec{R}}}
\newcommand{\Ib}{{\vec{I}}}
\newcommand{\Nb}{{\vec{N}}}
\newcommand{\Tb}{{\vec{T}}}
\newcommand{\Ub}{{\vec{U}}}
\newcommand{\Ideal}[1]{\left\langle {#1} \right\rangle}
\newcommand{\codim}{{\mathop{\rm codim}\nolimits\,}}
\newcommand{\rank}{{\mathop{\rm rank}\nolimits\,}}
\newcommand{\diag}{{\mathop{\rm diag}\nolimits\,}}
\newcommand{\Span}{{\mathop{\rm span}\nolimits\,}}
\spnewtheorem{algorithm}{Algorithm}{\bf}{\rm}
\journalname{Numerische Mathematik}
\begin{document}

\title{Prony's method in several variables
}


\author{Tomas Sauer
}


\institute{Tomas Sauer \at
              Lehrstuhl f\"ur Mathematik mit Schwerpunkt Digitale
              Bildverarbeitung \& FORWISS,
              Universit\"at Passau, Innstr. 43, D-94053 \\
              Tel.: +49 851 509 3100\\
              Fax: +49 851 509 3102\\
              \email{Tomas.Sauer@uni-passau.de}           
}

\date{Received: date / Accepted: date}

\maketitle

\begin{abstract}
  The paper gives an extension of Prony's method to the multivariate
  case which is based on the relationship between polynomial
  interpolation, normal forms modulo ideals and H--bases. Though the
  approach is mainly of algebraic nature, we also give an algorithm
  using techniques from Numerical Linear Algebra to solve the problem
  in a fast and efficient way.
  \keywords{Prony's method \and polynomial interpolation \and H--basis}
\end{abstract}

\section{Introduction}
The goal of the original
Prony method is to compute a parameter estimation for
a finite univariate exponential sum
\begin{equation}
  \label{eq:introExpSum}
  f(x) = \sum_{j=1}^M f_j \, e^{\omega_j x},  
\end{equation}
from sampled values $f(x_j)$, $j=1,\dots,N$,
where neither the \emph{frequencies} $\omega_1,\dots,\omega_M$ nor the
\emph{coefficients} $f_1,\dots,f_M$ are known. By a simple but
ingenious trick \cite{prony95:_essai}, due to Gaspard Richard, Baron
de Prony, in 1795, the
frequencies can be determined by computing the kernel of a certain
Hankel matrix and then finding the zeros of a polynomial whose
coefficients are formed from the kernel vector whereafter the
coefficients can be easily determined by solving a linear system.

Recently, Prony's method has gained new interest in the context of
sparsity, where (\ref{eq:introExpSum}) can be interpreted
as a signal $f$ that consists of a moderate number of simple
oscillations and hence permits a sparse representation once the
frequencies and the coefficients are known.

The recent survey \cite{plonka14:_prony} connects
Prony's method and its variations to sparsity problems and
also refers to recent developments, for
example \cite{plonka13:_how_fourier}. Here, we
aim at extending the method to the multivariate case. A first
step in this direction has been made in \cite{potts13:_param}, but it
uses projections onto single variables in contrast to which this paper
progresses differently
by pointing out and using the strong relationship between Prony's method
and multivariate polynomial interpolation through constructive ideal
theory. In the end, this leads to an algorithm that, though it solves
a nonlinear problem, relies entirely on procedures from Linear
Algebra, in particular on orthogonal projections which results in a
relatively stable method; nonlinearity only enters when certain
eigenvalue problems for a commuting family of matrices have to be
solved. These \emph{multiplication tables} modulo an ideal for a given
normal form spaces are the natural generalization of the
\emph{Frobenius companion matrix} whose role in the numerical
realization of Prony's method is well--known,
cf. \cite{plonka14:_prony}.

In one variable, Prony's method in its simplest version consists of
determining the coefficients of the so--called \emph{Prony polynomial}
as a zero eigenvector of a certain Hankel matrix formed by samples of
the function and then finding the zeros of this polynomial which are
$e^\omega$, $\omega \in \Omega$. In the multivariate case the kernel
of the Hankel matrix is an ideal and the eigenvalues have to be
determined for the operation of multiplication modulo this ideal. To
compute such multiplication tables requires a well--defined analog of
euclidean division, a problem that actually triggered the invention of
\emph{Gr\"obner bases} in \cite{Buchberger65}. We will study these
multivariate algebraic issues that are closely related to
\emph{minimal degree interpolation} and derive an algorithm that
extends the idea of finding a ``Prony ideal'' and to determine the
frequencies by means of generalized companion matrices. Even if the
algorithm is of algebraic nature, the choice of orthogonal H--bases
makes it possible to implement it in a floating point environment
relying on standard procedures from Numerical Linear Algebra like SVD
and QR factorizations.

The paper is organized as follows: in Section~\ref{sec:Basics}, we
recall the necessary tools from computational numerical ideal theory and show
how to apply them to Prony's problem. This leads to an algorithm which
works entirely on vectors and matrices and can be implemented in quite
a straightforward fashion in Matlab or Octave.
\cite{eaton09:_gnu_octav}. To illustrate the main ideas of the concept
we have a more careful look at the simplest possible case in
Section~\ref{sec:Examples} and show some of the results obtained by
the numerical implementation. A short remark how this can be applied
to reconstruct sparse polynomials from sampling and a short summary
conclude the paper.

\section{Basic concepts and first solution}
\label{sec:Basics}
The problem to be solved by Prony's method in several variables is
still easy to state: For a 
\emph{finite} set $\emptyset \neq \Omega \subset \CC^s$ of
\emph{frequencies} and \emph{coefficients} $f_\omega \in \CC \setminus
\{0 \}$, the goal is to reconstruct a function
\begin{equation}
  \label{eq:PronySetup}
  f = \sum_{\omega \in \Omega} f_\omega \, e^{\omega^T \cdot}, \qquad
  f_\omega \neq 0, \, \quad \omega \in \Omega,
\end{equation}
from \emph{measurements} of $f$, i.e., from point evaluations $f(z)$, $z \in
Z$, where $Z \subset \CC^s$ has to be a finite set as well. In the classical
version that we consider first, $Z$ will even be a subset of the
\emph{grid} $\ZZ^s$.

Since the function $f$ does not change if $\omega$ is replaced by
$\omega + 2 i \pi \alpha$, $\alpha \in \ZZ^s$, the frequencies have to
be different modulo $2 i \pi \ZZ^s$ so that the imaginary parts can be
restricted, for example, to $[0,2\pi)^s$. In other words, we have to
choose $\Omega$ as a finite subset of $( \RR + i \TT )^s$ where $\TT
:= \RR / 2 \pi \ZZ$. 

To obtain an extension of Prony's approach
to the multivariate case, we fix some notation. For notational simplicity, we
will restrict ourselves to the real case, i.e. $\Omega \subset \RR^s$,
but the method can be easily extended in a totally straightforward
manner to
the complex field by adding complex conjugation where needed. In fact,
the ``real'' implementation of the algorithm in Octave works for
complex data even without any changes.

By $\Pi = \RR [z] =
\RR [z_1,\dots,z_s]$ we denote the ring of all polynomials over the
field $\RR$ of reals, i.e., all polynomials with real coefficients.
With the use usual multiindex notation, where
$$
z^\alpha = z_1^{\alpha_1} \cdots z_s^{\alpha_s}, \qquad
|\alpha| = \alpha_1 + \cdots + \alpha_s,
$$
for a given \emph{multiindex} $\alpha \in \NN_0^s$, we denote by
\[
\Pi_n := \left\{ p(z) = \sum_{|\alpha| \le n} p_\alpha \, z^\alpha :
  p_\alpha \in \RR \right\}, \qquad n \in \NN_0,
\]
the vector space of all polynomials of \emph{total degree}
$$
\deg p := \max \{ |\alpha| : p_\alpha \neq 0 \}
$$
at most $n$. 
The dimension of this vector space will be abbreviated as $d_n := {
  n+s \choose s}$. Moreover, we will write
\[
\Pi_n^0 := \left\{ p(z) = \sum_{|\alpha| = n} p_\alpha \, z^\alpha :
  p_\alpha \in \RR \right\}, \qquad \qquad n \in \NN_0,
\]
for the \emph{homogeneous polynomials} or \emph{forms} of degree $n$,
a space of dimension $d_n^0 := { n+s-1 \choose s-1 }$,
and denote by $\Lambda : \Pi \to \Pi_{\deg p}^0$ the mapping that extracts the
\emph{leading form} of a polynomial:
$$
p (z) = \sum_{|\alpha| \le \deg p} p_\alpha \, z^\alpha \qquad \Rightarrow
\qquad
\Lambda (p)(z) =  \sum_{|\alpha| = \deg p} p_\alpha \, z^\alpha.
$$

\subsection{Kernels and ideals}
\label{ssec:KernId}
The fundamental tool is the (multidimensional) \emph{Hankel matrix}
\begin{equation}
  \label{eq:Fndef}
  \Fb_n := \left[ f(\alpha+\beta) :
    \begin{array}{c}
      |\alpha| \le n \\ |\beta| \le n
    \end{array}
  \right] \in \RR^{d_n \times d_n}, \qquad n \in \NN.
\end{equation}
For any polynomial $p \in \Pi_n$, $p(z) = \sum p_\alpha z^\alpha$, we
write its coefficient vector as $\pb = \left[ p_\alpha : |\alpha| \le n
\right]$ and obtain for $|\alpha| \le n$ that
\begin{align*}
  \left( \Fb_n \pb \right)_\alpha &= \sum_{|\beta| \le n}
  f(\alpha+\beta) \, p_\beta = \sum_{|\beta| \le n} \sum_{\omega \in
    \Omega} f_\omega \, e^{\omega^T (\alpha + \beta)} p_\beta
  = \sum_{\omega \in \Omega} f_\omega e^{\omega^T \alpha} \,
  \sum_{|\beta| \le n} p_\beta z^{\omega^T \beta} \\
  &= \sum_{\omega \in \Omega} f_\omega e^{\omega^T \alpha} \, p(
  e^\omega ).
\end{align*}
For abbreviation we set $z_\omega := e^{\omega} = \left(
  e^{\omega_1},\cdots,e^{\omega_s} \right)$ as well as $Z_\Omega :=
e^\Omega = \{ z_\omega : \omega \in \Omega \}$ and then observe that
by the above simple computation the \emph{zero dimensional ideal} 
\[
I_\Omega := \{ p \in \Pi : p(z_\omega) = 0, \, \omega \in \Omega \} =
I (Z_\Omega)
\]
plays an important role that can be stated as follows.

\begin{lemma}\label{L:Ideal=>Zero}
  If $p \in I_\Omega \cap \Pi_n$ then $\Fb_n \pb = 0$.
\end{lemma}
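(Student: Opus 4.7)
The plan is essentially to read off the conclusion from the coefficient computation that already appears in the paragraph immediately preceding the lemma. That derivation, done for an arbitrary $p \in \Pi_n$, expresses the $\alpha$-th entry of $\Fb_n\pb$ as
$$
\left( \Fb_n \pb \right)_\alpha = \sum_{\omega \in \Omega} f_\omega \, e^{\omega^T \alpha} \, p(z_\omega),
$$
since $\sum_{|\beta|\le n} p_\beta \, e^{\omega^T \beta} = \sum_{|\beta|\le n} p_\beta \, z_\omega^\beta = p(z_\omega)$ by the definition $z_\omega = e^\omega$ and standard multiindex conventions. So the real work has already been carried out in the preamble, and what remains is to invoke the hypothesis.

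Given $p \in I_\Omega \cap \Pi_n$, the defining property of the ideal $I_\Omega = I(Z_\Omega)$ is precisely that $p(z_\omega) = 0$ for every $\omega \in \Omega$. Substituting into the displayed identity above, every summand of $(\Fb_n\pb)_\alpha$ vanishes, so $(\Fb_n\pb)_\alpha = 0$ for all $|\alpha| \le n$. This yields $\Fb_n \pb = 0$, as claimed.

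There is no genuine obstacle here: the only ingredient beyond the preceding computation is the interpretation of $I_\Omega$ as vanishing conditions at the points of $Z_\Omega$. The only thing one has to be slightly careful about is the degree bookkeeping, namely that membership $p \in \Pi_n$ guarantees that the coefficient vector $\pb = [p_\alpha : |\alpha|\le n]$ has precisely the right length $d_n$ to be multiplied by $\Fb_n \in \RR^{d_n \times d_n}$, so that the index range in the inner sum matches the one used to define $\Fb_n$ in (\ref{eq:Fndef}). Once this matching of indices is observed, the proof reduces to a single substitution.
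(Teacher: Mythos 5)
Your proposal is correct and is exactly the argument the paper intends: the lemma is an immediate consequence of the displayed computation preceding it, which gives $(\Fb_n\pb)_\alpha = \sum_{\omega\in\Omega} f_\omega e^{\omega^T\alpha} p(z_\omega)$, and the hypothesis $p\in I_\Omega$ kills every summand. Nothing further is needed.
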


\begin{corollary}\label{C:IdealDim}
  For $n \in \NN$ we have
  $
  \dim \ker \Fb_n \ge \dim ( I_\Omega \cap \Pi_n ).
  $
\end{corollary}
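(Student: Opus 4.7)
The plan is to leverage Lemma~\ref{L:Ideal=>Zero} together with the fact that the coefficient map $p \mapsto \pb$ is a linear isomorphism $\Pi_n \to \RR^{d_n}$. The coefficient map identifies each polynomial with its coefficient vector in a unique way, and it is obviously $\RR$-linear.

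First, I would introduce the restriction of this coefficient map to the finite-dimensional subspace $I_\Omega \cap \Pi_n \subset \Pi_n$, obtaining a linear map
\[
\Phi \colon I_\Omega \cap \Pi_n \to \RR^{d_n}, \qquad p \mapsto \pb.
\]
Since the unrestricted coefficient map on $\Pi_n$ is injective, $\Phi$ is injective too, hence $\dim \Phi(I_\Omega \cap \Pi_n) = \dim(I_\Omega \cap \Pi_n)$.

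Next, I would invoke Lemma~\ref{L:Ideal=>Zero} to conclude that the image of $\Phi$ lies inside $\ker \Fb_n$: for any $p \in I_\Omega \cap \Pi_n$ the lemma gives $\Fb_n \pb = 0$, so $\pb \in \ker \Fb_n$. Combining these two observations yields
\[
\dim(I_\Omega \cap \Pi_n) = \dim \Phi(I_\Omega \cap \Pi_n) \le \dim \ker \Fb_n,
\]
which is exactly the claimed inequality.

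There is no real obstacle here; the corollary is purely bookkeeping on top of the lemma. The only point that deserves explicit mention is that one is tacitly identifying the subspace $I_\Omega \cap \Pi_n$ of polynomials with its image of coefficient vectors in $\RR^{d_n}$, so that the notions of dimension on the two sides match. Once this identification is made, the statement is immediate and does not rely on any special properties of $\Omega$ beyond its finiteness.
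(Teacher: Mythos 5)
Your argument is correct and is exactly the (implicit) reasoning behind the corollary in the paper, which states it as an immediate consequence of Lemma~\ref{L:Ideal=>Zero}: the coefficient map embeds $I_\Omega \cap \Pi_n$ injectively into $\ker \Fb_n$, so the dimensions compare as claimed. Nothing is missing.
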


\noindent
In general, the converse of Lemma~\ref{L:Ideal=>Zero} does not hold
true. This is most easily seen for $n = 0$, where $p = 1$ yields $\pb
= 1$ and
$$
\Fb_n \pb = \sum_{\omega \in \Omega} f_\omega.
$$
Hence, if the coefficients of the unknown function happen to sum to zero,
then the ideal structure in $\Pi_0$ cannot be recovered from
information on $\Fb_0$ alone.

\subsection{Ideals, bases and interpolation}
For a converse of Lemma~\ref{L:Ideal=>Zero} under some additional
restrictions, we have a closer look at the ideal $I_\Omega$ from the
point of view of multivariate polynomial interpolation,
cf. \cite{GascaSauer00,Sauer02a,Sauer06a}. To that end, we will
construct and use H--basis $H$ for the ideal $I_\Omega$. Recall that
an \emph{H--basis} for an ideal $I \subset \Pi$ is
a finite set $H \subset \Pi$ such that
\begin{equation}
  \label{eq:HBasisDef}
  p \in I \qquad \Leftrightarrow \qquad p = \sum_{h \in H} p_h \,
  h, \quad \deg p_h + \deg h \le \deg p, \, h \in H,  
\end{equation}
for properly chosen polynomials $p_h$. The important point of an
H--basis is the non-redundant representation of $p$ in
(\ref{eq:HBasisDef}) by means of
$$
\Ideal{H} = \left\{ \sum_{h \in H} p_h \, h : p_h \in \Pi \right\},
$$
the \emph{ideal generated by $H$} with respect to the total degree: no
summand on the right hand side of \eqref{eq:HBasisDef} has a larger
total degree than $p$ and therefore there is no cancellation of
redundant terms of higher degree in the sum.

H--bases were already introduced by Macaulay in \cite{Macaulay94} and
studied by Gr\"obner
\cite{groebner37:_ueber_macaul_system_bedeut_theor_differ_koeff,GroebnerII}
especially in the context of homogenization and dehomogenization,
see also \cite{MoellerSauer00}. H--bases without term orders were
investigated in \cite{Sauer01}, but in terms of more conventional
Computer Algebra  any Gr\"obner basis with respect
to a graded \emph{term order}, i.e., any term order $\prec$ such that
$|\alpha| < |\beta|$ implies $\alpha \prec \beta$, is also an H--basis,
cf. \cite{CoxLittleOShea92}.

Let let $(\cdot,\cdot) : \Pi
\times \Pi \to \RR$ denote the inner product
\begin{equation}
  \label{eq:misInnerProd}
  (p,q) = \sum_{\alpha \in \NN_0^s} p_\alpha \, q_\alpha,
\end{equation}
where
$$
p(z) =
\sum_{|\alpha| \le \deg p} p_\alpha z^\alpha, \qquad q(z) =
\sum_{|\alpha| \le \deg q} q_\alpha z^\alpha.
$$
As shown in \cite{Sauer01}, there exists, for any ideal $I \subset
\Pi$, an H--basis $H$ of $I$ such that
any polynomial $p \in \Pi$ can be written as
\begin{equation}
  \label{eq:HBaseRemainder}
  p = \sum_{h \in H} p_h h + r, \qquad \deg r \le \deg p,
\end{equation}
where the \emph{remainder} $r$ is \emph{orthogonal} to the
ideal in the sense that any \emph{homogeneous component}
$$
r_k^0 (z) := \sum_{|\alpha|  = k} r_\alpha z^\alpha \in \Pi_k^0, \qquad
k=0,\dots,\deg r,
$$
of $r$ is orthogonal to all leading terms in the ideal:
\begin{equation}
  \label{eq:Remainderorthog}
  0 = \left( r_k^0,\Lambda(p) \right), \qquad p \in I_\Omega \cap \Pi_k,
  \qquad k=0,\dots,\deg r.  
\end{equation}
The remainder $r$ can computed in efficient and numerically stable way by
the orthogonal reduction process introduced in \cite{Sauer01}, see
\cite{MoellerSauerSM00I,MoellerSauerSM00II} for more algorithmic and
numerical details. We briefly recall this process that will be adapted
to our specific needs here. For a given finite set $H \subset \Pi$ of
polynomials that does not necessarily have to be an H--basis, and
$k := \deg p$ one considers the homogeneous subspace
$$
V_k (H) := \left\{ \sum_{h \in H} q_h^0 \Lambda (h) : q_h^0 \in \Pi_{k -
    \deg h}^0 \right\} \subset \Pi_k^0
$$
and computes an orthogonal projection of $\Lambda (p)$ onto this
vector space, i.e., 
chooses particular polynomials $q_{k,h}^0 = q_{k,h}^0 (p) \in \Pi_{k -
  \deg h}^0$, $h \in H$, depending on $p$ such that
$$
( r_k^0, V_k (H) ) = 0, \qquad r_k^0 := r_k^0 (p) := \Lambda (p) - \sum_{h \in
  H} q_{k,h}^0 \Lambda (h).
$$
Then one replaces $p$ by
$$
p - \sum_{h \in H} q_{k,h}^0 \, h - r_k^0,
$$
which eliminates $\Lambda (p)$ and thus reduces the total degree of $p$
by at least one. After repeating this process
at most $\deg p$ times, we end up with a decomposition
\begin{equation}
  \label{eq:pDecompReduct}
  p = \sum_{h \in H} \sum_{k=0}^{\deg p} q_{k,h}^0 \, h + \sum_{k=0}^{\deg
    p} r_k^0 (p)
  =: \sum_{h \in H} p_h \, h + r,  
\end{equation}
where, by construction, each homogeneous component of $r$ is in the
orthogonal complement of the respective $V_k (H)$. In general,
however, the remainder depends on $H$ and on the particular way how
the orthogonal projections, i.e., the polynomials $q_{k,h}^0 (p)$ are
chosen in any step, but things simplify significantly once $H$ is an
H--basis for $\Ideal{H}$.

\begin{theorem}[\cite{Sauer01}]
  If $H$ is an H--basis for $\Ideal{H}$ then the remainder $r$
  computed by reduction
  depends only on $\Ideal{H}$ and the choice of the inner product and
  is zero iff $p \in \Ideal{H}$.
\end{theorem}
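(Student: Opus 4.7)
The plan is to prove uniqueness of the remainder by exploiting the characteristic feature of an H--basis, namely that for any polynomial in $\langle H \rangle$ the degrees of all summands in its representation are controlled by the total degree. The ``iff'' statement will then follow from uniqueness by exhibiting two obvious decompositions.

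First I would establish the key lemma: if $H$ is an H--basis for $I := \langle H \rangle$, then for every $k \in \NN_0$ the set of leading forms of degree $k$ that arise from elements of $I$, namely $\{ \Lambda(q) : q \in I, \deg q = k \}$, coincides with $V_k(H)$. One inclusion is trivial from the definition of $V_k(H)$. For the other, take $q \in I$ with $\deg q = k$; by the H--basis property we write $q = \sum_h p_h h$ with $\deg p_h + \deg h \le k$, and extracting the homogeneous component of degree $k$ gives $\Lambda(q) = \sum_{\deg p_h + \deg h = k} \Lambda(p_h) \Lambda(h) \in V_k(H)$. Consequently, the orthogonality requirement on the remainder from the reduction process — that each $r_k^0$ be orthogonal to $V_k(H)$ — is equivalent to the ideal--theoretic orthogonality condition \eqref{eq:Remainderorthog} when $H$ is an H--basis.

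Next I would prove uniqueness. Suppose $p$ has two decompositions
\[
  p = \sum_{h \in H} p_h \, h + r = \sum_{h \in H} p_h' \, h + r',
\]
where each homogeneous component of both $r$ and $r'$ is orthogonal to the corresponding $V_k(H)$. Then $r - r' = \sum_h (p_h' - p_h) h \in I$. Assume for contradiction $r \neq r'$ and set $k := \deg(r - r')$. The lemma above gives $\Lambda(r - r') \in V_k(H)$. But $\Lambda(r - r') = r_k^0 - (r')_k^0$ is, by construction, orthogonal to $V_k(H)$. Hence $\Lambda(r - r') = 0$, contradicting $\deg(r - r') = k$. Therefore $r = r'$, so the remainder depends only on $I = \langle H \rangle$ and the inner product \eqref{eq:misInnerProd}, not on the particular choices made during reduction or on the specific generators $H$.

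Finally, the equivalence $r = 0 \Leftrightarrow p \in \langle H \rangle$ is immediate from uniqueness: if $p \in \langle H \rangle$, then $p = p + 0$ is a valid decomposition in which the zero remainder trivially satisfies the orthogonality conditions, and by uniqueness this must coincide with the one produced by reduction; conversely, $r = 0$ exhibits $p = \sum_h p_h h \in \langle H \rangle$ directly from \eqref{eq:pDecompReduct}. The main obstacle is the first step: showing that the H--basis property forces the leading--form space of the ideal in degree $k$ to be exactly $V_k(H)$, which is precisely what distinguishes an H--basis from an arbitrary generating set and what makes the orthogonal reduction a well--defined normal form.
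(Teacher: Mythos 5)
Your proof is correct. Note that the paper itself gives no argument for this theorem at all --- it is quoted verbatim from \cite{Sauer01} and used as a black box --- so there is nothing in the text to compare against line by line; what you have written is essentially the standard proof that the paper delegates to the reference. Your key lemma, that for an H--basis $V_k(H)$ coincides with the span of the leading forms of the degree-$k$ elements of $\Ideal{H}$, is exactly the characterization $\Lambda(I)=\Lambda(\Ideal{H})$ that the paper itself invokes later as \eqref{eq:LambdaIdHB} (citing Proposition~4.2 of \cite{Sauer01}); it is also precisely the point where the H--basis hypothesis enters, since for an arbitrary generating set one only gets the inclusion $V_k(H)\subseteq\{\Lambda(q): q\in\Ideal{H},\ \deg q=k\}\cup\{0\}$ and the reduction remainder genuinely depends on $H$. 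Two small points worth making explicit if you write this up: first, the uniqueness step as written compares two decompositions over the same $H$, and the independence of the generating set follows only because your lemma shows the orthogonality constraint \eqref{eq:Remainderorthog} is intrinsic to the ideal --- you do say this, but it is the load-bearing remark; second, the final contradiction uses that the restriction of \eqref{eq:misInnerProd} to $\Pi_k^0$ is positive definite, so that an element of $V_k(H)$ orthogonal to all of $V_k(H)$ vanishes. With those observations the argument is complete, including the ``iff'' clause via the trivial decomposition $p=\sum_h p_h h+0$ for $p\in\Ideal{H}$.
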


\noindent
Consequently, $\nu (p) := r$ is a \emph{normal form} for $p$ modulo
the ideal $\Ideal{H}$ whenever $H$ is an H--basis.
If we consider $\nu : \Pi \to \Pi$, $p \mapsto \nu (p)$, as a mapping,
its image, the linear space $N := \nu(\Pi) \subset \Pi$ is a canonical
interpolation space. With the specific canonical choice
(\ref{eq:misInnerProd}) of the inner product, the normal form space is
the \emph{Macaulay inverse system}, as it was named in
\cite{groebner37:_ueber_macaul_system_bedeut_theor_differ_koeff,GroebnerII}.

\begin{theorem}\label{T:RemainderSpace}
  The normal form space $N := \nu(\Pi)$ has dimension $\# \Omega$ and
  is a degree reducing interpolation space for $Z_\Omega = e^\Omega$,
  i.e., for any $p \in \Pi$ there exists a unique $r \in N$ such that
  \begin{equation}
    \label{eq:RemainderSpace1}
    r(z_\omega) = p (z_\omega), \quad \omega \in \Omega, \qquad
    \mbox{and} \qquad
    \deg r \le \deg p.
  \end{equation}
  Moreover, we have the direct sum decompositions
  \begin{equation}
    \label{eq:RemainderSpace2}
    \Pi_n = ( N \cap \Pi_n ) \oplus ( I_\Omega \cap \Pi_n ).
  \end{equation}
\end{theorem}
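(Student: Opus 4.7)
The plan is to deduce the three assertions directly from the preceding reduction theorem (\cite{Sauer01}) applied to an H--basis $H$ of $I_\Omega$. That theorem provides a well--defined normal form map $\nu:\Pi\to N$ with $p-\nu(p)\in I_\Omega$ for every $p$ and with $\nu(p)=0$ exactly when $p\in I_\Omega$. Via $p=(p-\nu(p))+\nu(p)$ this already gives the vector space splitting $\Pi=I_\Omega\oplus N$, so in particular $N\cap I_\Omega=\{0\}$ and $\dim N=\dim(\Pi/I_\Omega)$. Since the elements of $\Omega\subset\RR^s$ are pairwise distinct and the componentwise exponential is injective on $\RR^s$, the nodes $z_\omega=e^\omega$ are also pairwise distinct, hence $\#Z_\Omega=\#\Omega$; a standard Lagrange--type argument on a finite point set shows that the evaluation map $\Pi\to\RR^{Z_\Omega}$, $p\mapsto (p(z_\omega))_{\omega\in\Omega}$, is surjective with kernel $I(Z_\Omega)=I_\Omega$. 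Combining these identifications yields $\dim N=\#\Omega$.

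For the interpolation claim (\ref{eq:RemainderSpace1}), I would take $r:=\nu(p)\in N$ for a given $p\in\Pi$. Since $p-r\in I_\Omega$ vanishes on $Z_\Omega$, we have $r(z_\omega)=p(z_\omega)$ for all $\omega\in\Omega$. The degree bound $\deg r\le\deg p$ is built into the reduction process: homogeneous components $r_k^0(p)$ are created only for $k\le\deg p$, so summing them yields $\deg r\le\deg p$. Uniqueness is then immediate from $N\cap I_\Omega=\{0\}$, since any two interpolants $r_1,r_2\in N$ satisfy $r_1-r_2\in N\cap I_\Omega$ and hence $r_1=r_2$.

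For the graded decomposition (\ref{eq:RemainderSpace2}), directness is immediate from $(N\cap\Pi_n)\cap(I_\Omega\cap\Pi_n)\subset N\cap I_\Omega=\{0\}$. For the sum, pick $p\in\Pi_n$ and apply the reduction to obtain
$$
p=\sum_{h\in H}p_h\,h+r,\qquad r=\nu(p).
$$
The degree bound above puts $r\in N\cap\Pi_n$, and the H--basis property (\ref{eq:HBasisDef}) forces $\deg p_h+\deg h\le\deg p\le n$, so $\sum_{h\in H}p_h\,h\in I_\Omega\cap\Pi_n$. Hence $\Pi_n=(N\cap\Pi_n)+(I_\Omega\cap\Pi_n)$, and the decomposition closes.

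The step I expect to be the main obstacle is the graded decomposition. For a merely generating set of $I_\Omega$, an element of $I_\Omega\cap\Pi_n$ might admit only representations $\sum_h p_h\,h$ involving summands of total degree strictly larger than $n$ whose leading forms cancel; this would break the sum at the graded level. The H--basis hypothesis together with the degree--reducing orthogonal projection of \cite{Sauer01} is exactly what rules this out and enforces the simultaneous degree control on both the quotient part $\sum_h p_h\,h$ and the normal form part $r$. Once the quoted theorem is available, (\ref{eq:RemainderSpace2}) drops out cleanly.
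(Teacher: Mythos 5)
Your argument is correct, and for the dimension count and the interpolation property \eqref{eq:RemainderSpace1} it coincides with the paper's proof: well-definedness of $\nu$ from the quoted reduction theorem, the interpolation identity from $p-\nu(p)\in\Ideal{H}=I_\Omega$, the degree bound from \eqref{eq:HBaseRemainder}, and uniqueness plus $\dim N=\codim I_\Omega=\#\Omega$ from $N\cap I_\Omega=\{0\}$. (You fill in two details the paper leaves implicit, namely that the nodes $z_\omega$ are pairwise distinct and that evaluation on a finite point set is surjective; that is harmless.) Where you genuinely diverge is \eqref{eq:RemainderSpace2}: the paper does not argue it directly but deduces it from the \emph{homogeneous} leading-form decomposition \eqref{eq:RemainderSpace2a}, $\Pi_n^0=\left(\Lambda(N)\cap\Pi_n^0\right)\oplus\left(\Lambda(I_\Omega)\cap\Pi_n^0\right)$, quoted from \cite[Theorem~5.11]{Sauer06a}. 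You instead observe that for $p\in\Pi_n$ the splitting $p=(p-\nu(p))+\nu(p)$ already lands in $(I_\Omega\cap\Pi_n)+(N\cap\Pi_n)$ because $\deg\nu(p)\le\deg p$, and directness follows from $N\cap I_\Omega=\{0\}$. This is a legitimately more elementary and self-contained route; note, though, that the paper's detour through \eqref{eq:RemainderSpace2a} is not wasted, since that homogeneous formula is used again later (in the proof of Lemma~\ref{L:ConstHilbert}), whereas your argument does not produce it. One small expository caveat: you present $N\cap I_\Omega=\{0\}$ as a consequence of the splitting $\Pi=I_\Omega\oplus N$, but logically you need it first to make that sum direct; it follows instead from idempotence of $\nu$ on $N$ together with ``$\nu(p)=0$ iff $p\in\Ideal{H}$'' from the quoted theorem. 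Also, the obstacle you flag at the end (cancellation of high-degree leading forms for a non-H-basis generating set) is not actually where the H-basis hypothesis enters your argument; it enters through the well-definedness of $\nu$, the identity $\Ideal{H}=I_\Omega$, and $N\cap I_\Omega=\{0\}$.
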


\noindent
To make the reader a bit more acquainted with the simple arguments behind this
theorem, we give a short proof.

\begin{proof}
  Since $\nu(p) = \nu(p')$ whenever $p-p' \in \Ideal{H}$,
  cf. \cite{Sauer01}, the 
  mapping $\nu$ is indeed well--defined and the interpolation property
  $$
  p (z_\omega) = \sum_{h \in H} p_h (z_\omega) \, h (z_\omega) + \nu(p)
  (z_\omega)
  = \nu (p) (z_\omega), \qquad \omega \in \Omega,
  $$
  as well as the degree reduction follow directly from
  (\ref{eq:HBaseRemainder}). If there were two interpolants $r,r'$
  for $p$ in $N$, then $r - r' \in I_\Omega \cap N = \{ 0 \}$,
  hence the interpolant is unique and therefore $\dim N = \codim
  I_\Omega = \# \Omega$, see also \cite{boor05:_ideal}. Finally,
  (\ref{eq:RemainderSpace2}) follows from the more general homogeneous
  formula
  \begin{equation}
    \label{eq:RemainderSpace2a}
    \Pi_n^0 = \left( \Lambda (N) \cap \Pi_n^0 \right) \oplus \left(
      \Lambda (I_\Omega) \cap \Pi_n^0 \right), \qquad n \in \NN_0,
  \end{equation}
  which has been proved in a wider context in
  \cite[Theorem~5.11]{Sauer06a}. \qed
\end{proof}

\noindent
Since $N$ is a finite dimensional space, it has a finite basis, the
most common one being the \emph{Lagrange fundamental polynomials}
$\ell_\omega \in N$, $\omega \in \Omega$, defined by
\begin{equation}
  \label{eq:ellomegaDef}
  \ell_\omega (z_{\omega'}) = \delta_{\omega,\omega'}, \qquad
  \omega,\omega' \in \Omega.
\end{equation}
These polynomials can be given explicitly as
\begin{equation}
  \label{eq:ellomegaExpl}
  \ell_\omega = \nu \left( \prod_{\omega' \in \Omega \setminus \{ \omega
      \}} \frac{( \cdot - z_{\omega'} )^T ( z_\omega - z_{\omega'}
      )}{( z_\omega - z_{\omega'} )^T ( z_\omega - z_{\omega'} )}
  \right), \qquad \omega \in \Omega,
\end{equation}
and obviously form a basis of $N$. Thus,
$$
\deg N := \max \{ \deg r : r \in N \} = \max \{ \deg \ell_\omega :
\omega \in \Omega \}
$$
is a well defined number.

\subsection{Back to kernels}
Based on the concepts above, we can now give the converse of
Lemma~\ref{L:Ideal=>Zero} under the additional requirement that $n$ is
sufficiently large.

\begin{theorem}\label{T:FnIdeal}
  If $n \ge \deg N$ then 
  \begin{equation}
    \label{eq:FnIdeal}
    \Fb_n \pb = 0 \qquad \Leftrightarrow \qquad p \in ( I_\Omega \cap
    \Pi_n ).
  \end{equation}
\end{theorem}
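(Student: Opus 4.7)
The $(\Leftarrow)$ direction is just Lemma~\ref{L:Ideal=>Zero}, so the work is entirely in proving the converse. My plan is to leverage the Lagrange basis $\{\ell_\omega : \omega \in \Omega\}$ of the normal form space $N$ introduced in \eqref{eq:ellomegaDef}, whose existence is guaranteed by Theorem~\ref{T:RemainderSpace}. The key observation is that the hypothesis $n \ge \deg N$ is precisely what ensures every $\ell_\omega$ lies in $\Pi_n$, which is what will allow us to ``isolate'' individual point evaluations $p(z_\omega)$.

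First, I would revisit the computation preceding Lemma~\ref{L:Ideal=>Zero} and rewrite the assumption $\Fb_n \pb = 0$ in the functional form
\begin{equation*}
  \sum_{\omega \in \Omega} f_\omega \, z_\omega^\alpha \, p(z_\omega) = 0,
  \qquad |\alpha| \le n.
\end{equation*}
By linearity in the monomial $z^\alpha$, this extends to
\begin{equation*}
  \sum_{\omega \in \Omega} f_\omega \, q(z_\omega) \, p(z_\omega) = 0,
  \qquad q \in \Pi_n.
\end{equation*}

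Next, I would exploit $n \ge \deg N$. By the definition of $\deg N$ and the formula \eqref{eq:ellomegaExpl}, every Lagrange polynomial $\ell_{\omega^*} \in N$ satisfies $\deg \ell_{\omega^*} \le \deg N \le n$, so $\ell_{\omega^*} \in \Pi_n$ is admissible as a test polynomial $q$ in the identity above. Plugging it in and using the interpolation property \eqref{eq:ellomegaDef} collapses the sum to a single term, yielding $f_{\omega^*}\, p(z_{\omega^*}) = 0$. Since the coefficients $f_\omega$ are nonzero by assumption in \eqref{eq:PronySetup}, this gives $p(z_{\omega^*}) = 0$, and as $\omega^* \in \Omega$ was arbitrary, we conclude $p \in I_\Omega$. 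Combined with $p \in \Pi_n$, this establishes the right-hand side of \eqref{eq:FnIdeal}.

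There is no real obstacle here once the Lagrange basis is available; the only subtle point is the degree bound $\deg \ell_\omega \le \deg N$, which requires Theorem~\ref{T:RemainderSpace} to guarantee that $N$ is a degree-reducing interpolation space of dimension $\#\Omega$ containing the $\ell_\omega$. The hypothesis $n \ge \deg N$ is sharp in this argument precisely because we need the test functions $\ell_\omega$ to be reachable within $\Pi_n$; the counterexample for $n=0$ discussed after Corollary~\ref{C:IdealDim} (with $\sum_\omega f_\omega = 0$) shows why some such lower bound on $n$ is unavoidable.
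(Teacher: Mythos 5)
Your proof is correct. The $(\Leftarrow)$ direction is indeed Lemma~\ref{L:Ideal=>Zero}, the identity $\left( \Fb_n \pb \right)_\alpha = \sum_{\omega} f_\omega z_\omega^\alpha p(z_\omega)$ is exactly the computation preceding that lemma, forming the linear combination over $|\alpha|\le n$ with the coefficients of $q$ is legitimate, and the degree bound $\deg \ell_{\omega^*} \le \deg N \le n$ is what makes $\ell_{\omega^*}$ an admissible test polynomial. So $f_{\omega^*} p(z_{\omega^*}) = 0$ and hence $p(z_{\omega^*})=0$ follows for every $\omega^*$.

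Your route differs from the paper's in how the conclusion is extracted from the Lagrange polynomials. The paper applies $\Fb_n$ \emph{to} the coefficient vectors $\lb_\omega$, obtaining $\Fb_n \lb_\omega = f_\omega \vb_\omega^n$, and then shows that $\Fb_n$ is injective on $N$ by invoking the linear independence of the Vandermonde rows $\vb_\omega^n$; the theorem then follows from the direct sum $\Pi_n = (N\cap\Pi_n)\oplus(I_\Omega\cap\Pi_n)$ of \eqref{eq:RemainderSpace2}. You instead pair the hypothesis $\Fb_n\pb = 0$ \emph{against} $\lb_{\omega^*}$ on the row side — in effect computing $\lb_{\omega^*}^T \Fb_n \pb$, which by symmetry of the Hankel matrix is the same bilinear quantity $f_{\omega^*} p(z_{\omega^*})$ — and read off the point values of $p$ directly. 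Your version is slightly more economical: it needs neither the rank of the Vandermonde matrix nor the direct sum decomposition, only the existence of the $\ell_\omega$ in $\Pi_n$ and their Kronecker-delta property. What the paper's formulation buys in exchange is the explicit statement $\Fb_n \lb_\omega = f_\omega \vb_\omega^n$, i.e., that $\Fb_n$ maps $N$ isomorphically onto the row space of the Vandermonde matrix, which is structural information reused implicitly elsewhere (e.g., in Lemma~\ref{L:HilbertFun}). One cosmetic caveat: your closing claim that $n\ge\deg N$ is ``sharp'' is only justified as ``necessary for this argument''; sharpness of the theorem itself is a separate matter, though Remark~\ref{R:RankProb} does exhibit failure for small $n$.
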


\begin{proof}
  The direction ``$\Leftarrow$'' has already been shown in
  Lemma~\ref{L:Ideal=>Zero}. For the converse, we obtain for the
  coefficient vectors $\lb_\omega$ of the polynomials $\ell_\omega$,
  $\omega \in \Omega$ that
  $$
  \left( \Fb_n \, \lb_\omega \right)_\alpha = \sum_{\omega' \in \Omega}
  f_\omega e^{\alpha^T \omega'} \, \ell_{\omega} (z_{\omega'})
  = f_\omega e^{\alpha^T \omega} = f_\omega \, z_\omega^\alpha,
  $$
  hence
  $$
  \Fb_n \lb_\omega = f_\omega \vb_\omega^n \neq 0, \qquad \vb_\omega^n
  := \left[z_\omega^\alpha : |\alpha| \le n \right]. 
  $$
  Since we can write any $r \in N \setminus \{ 0 \}$ as
  $$
  r = \sum_{\omega \in \Omega} r(z_\omega) \, \ell_\omega, \qquad [
  r(z_\omega) : \omega \in \Omega ] \neq 0,
  $$
  we can conclude that
  $$
  \Fb_n \rb = \sum_{\omega \in \Omega} f_\omega \, r (z_\omega) \,
  \vb_\omega^n \neq 0,
  $$
  since the vectors $\vb_\omega^n$ are the rows of the
  \emph{Vandermonde matrix}
  $$
  \Vb_n (\Omega) := \left[ z_\omega^\alpha :
    \begin{array}{c}
      \omega \in \Omega \\ |\alpha| \le n
    \end{array}
  \right] \in \RR^{\# \Omega \times d_n},
  $$
  which has rank $\# \Omega$, yielding that the vectors are
  $\vb_\omega^n$ are linearly independent. \qed
\end{proof}

\noindent
In $\Pi_{\# \Omega-1}$ polynomial interpolation at the sites $z_\omega$
is always possible for example by means of \emph{Kergin
  interpolation}, cf. \cite{Micchelli80}, or simply by noting that,
similar to (\ref{eq:ellomegaExpl}), the polynomial
$$
\sum_{\omega \in \Omega} p(z_\omega) \, \prod_{\omega' \in \Omega
  \setminus \{ \omega \}} \frac{( \cdot - z_{\omega'} )^T ( z_\omega -
  z_{\omega'})}{( z_\omega - z_{\omega'} )^T ( z_\omega - z_{\omega'}
  )} \in \Pi_{\# \Omega - 1}
$$
interpolates $p$ at $Z = e^\Omega$. Note, however, that usually $\#
\Omega$ is much larger than $\deg N$ as in the generic case we usually
have the relationship that
$$
{\deg N + s \choose s} \le \# \Omega < {\deg N + s + 1 \choose s}.
$$
If we assume like in the
classical univariate Prony method that $\# \Omega$ is known, we can
reconstruct the ideal from the Hankel matrix.

\begin{corollary}
  A polynomial $p \in \Pi_{\#\Omega}$ belongs to $I_\Omega$ if and
  only if $\pb \in \ker \Fb_{\# \Omega}$.
\end{corollary}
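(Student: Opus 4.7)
The plan is to deduce this corollary directly from Theorem~\ref{T:FnIdeal}, which already gives the desired equivalence whenever $n \ge \deg N$. Thus the only thing I have to verify is the degree bound $\deg N \le \# \Omega$, so that the choice $n = \# \Omega$ is admissible in Theorem~\ref{T:FnIdeal}.

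To establish $\deg N \le \# \Omega$, I would argue through the Lagrange basis $\{\ell_\omega : \omega \in \Omega\}$ of $N$ introduced in \eqref{eq:ellomegaDef}, using that $\deg N = \max_{\omega \in \Omega} \deg \ell_\omega$. For every $\omega \in \Omega$ there exists an interpolant at $Z_\Omega = e^\Omega$ for the Kronecker data $\delta_{\omega,\omega'}$ that lies in $\Pi_{\#\Omega - 1}$; one can, for instance, take the explicit Kergin-type formula displayed right after the proof of Theorem~\ref{T:RemainderSpace},
\[
\sum_{\omega' \in \Omega} \delta_{\omega,\omega'} \prod_{\omega'' \in \Omega \setminus \{\omega'\}} \frac{(\cdot - z_{\omega''})^T (z_{\omega'} - z_{\omega''})}{(z_{\omega'} - z_{\omega''})^T (z_{\omega'} - z_{\omega''})} \in \Pi_{\#\Omega - 1},
\]
so this interpolant has degree at most $\# \Omega - 1$. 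Since $N$ is degree reducing by Theorem~\ref{T:RemainderSpace}, the unique interpolant $\ell_\omega \in N$ to these data satisfies $\deg \ell_\omega \le \# \Omega - 1$. Taking the maximum over $\omega \in \Omega$ yields $\deg N \le \# \Omega - 1 < \# \Omega$.

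With this bound in hand, Theorem~\ref{T:FnIdeal} applied to $n = \# \Omega$ immediately gives $\Fb_{\# \Omega} \pb = 0 \iff p \in I_\Omega \cap \Pi_{\# \Omega}$, which is exactly the claim. I do not anticipate a genuine obstacle here; the mildly subtle point is simply to recognize that the degree bound for $N$ is forced by the existence of \emph{some} interpolant of degree at most $\# \Omega - 1$ combined with the degree-reducing property supplied by Theorem~\ref{T:RemainderSpace}, and is not something requiring a separate dimension count.
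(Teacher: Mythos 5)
Your proof is correct and is essentially the paper's own argument: the paragraph preceding the corollary establishes exactly your key point, namely that an interpolant of the Kronecker data exists in $\Pi_{\#\Omega-1}$, so the degree-reducing property of $N$ from Theorem~\ref{T:RemainderSpace} forces $\deg N \le \#\Omega - 1 < \#\Omega$, and Theorem~\ref{T:FnIdeal} then applies with $n = \#\Omega$. No gaps.
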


\noindent
Next, we define the numbers
\begin{equation}
  \label{eq:HilbertDef}
  v_k := v_k ( I_\Omega ) := \dim ( I_\Omega \cap \Pi_k ), \qquad
  k=0,\dots,n.  
\end{equation}
The mapping $k \mapsto v_k (I_\Omega)$ is called the (affine) \emph{volume
  function} of the ideal $I_\Omega$,  \cite[p.~159]{GroebnerII}, while
its complement function 
$$
k \mapsto h_k (I_\Omega) := d_k - v_k (I_\Omega) = \dim \Pi_k /
I_\Omega
$$
is the affine \emph{Hilbert
  function} of the ideal, cf. \cite[p.~447]{CoxLittleOShea92}.
In this terminology, we can summarize our findings as follows:
for zero dimensional ideals the affine Hilbert function becomes
constant once $k$ is large enough.

\begin{lemma}\label{L:ConstHilbert}
  For $k \in \NN_0$ we have
  \begin{equation}
    \label{eq:ConstHilbert}
    h_k (I_\Omega) \left\{
      \begin{array}{lcl}
        < h_{k+1} (I_\Omega), & \quad & k < \deg N, \\
        = h_{k+1} (I_\Omega), & & k \ge \deg N.
      \end{array}
    \right.
  \end{equation}
\end{lemma}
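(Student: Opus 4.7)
The plan is to translate the Hilbert function into dimensions of slices of the normal form space $N$ via Theorem~\ref{T:RemainderSpace}, and then analyse these using the graded structure of $N$. Concretely, (\ref{eq:RemainderSpace2}) gives
$$
h_k(I_\Omega) = d_k - v_k(I_\Omega) = \dim ( N \cap \Pi_k ),
$$
so the lemma reduces to a statement about how $\dim (N \cap \Pi_k)$ grows with $k$.

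Next I would argue that $N$ is \emph{graded}, i.e.\ $N = \bigoplus_{j \ge 0} N_j^0$ with $N_j^0 := \Lambda(N) \cap \Pi_j^0$. On the one hand, the construction of $\nu$ in \cite{Sauer01} forces every homogeneous component $r_j^0$ of $r \in N$ to be orthogonal to $V_j(H) = \Lambda(I_\Omega) \cap \Pi_j^0$; the identification $V_j(H) = \Lambda(I_\Omega) \cap \Pi_j^0$ is precisely the H--basis property. Combining this orthogonality with the direct sum (\ref{eq:RemainderSpace2a}) places $r_j^0 \in \Lambda(N) \cap \Pi_j^0$. Conversely, any homogeneous $s \in \Lambda(N) \cap \Pi_j^0$ is fixed by the reduction step, so $s = \nu(s) \in N$. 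This yields the grading and, by a dimension count,
$$
h_{k+1}(I_\Omega) - h_k (I_\Omega) = \dim N_{k+1}^0 \ge 0.
$$
Since $\deg N = \max \{ j : N_j^0 \neq 0 \}$ by the grading, the right hand side vanishes for $k \ge \deg N$, which is the constancy range.

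For the strict growth regime I would use that the leading form ideal $\Lambda(I_\Omega)$ is a homogeneous ideal: if $N_j^0 = 0$, equivalently $\Lambda(I_\Omega) \cap \Pi_j^0 = \Pi_j^0$, then multiplication by $z_1, \dots, z_s$ together with $\Pi_{j+1}^0 = z_1 \Pi_j^0 + \cdots + z_s \Pi_j^0$ shows $\Lambda(I_\Omega) \cap \Pi_{j+1}^0 = \Pi_{j+1}^0$, i.e.\ $N_{j+1}^0 = 0$. Contrapositively, $N_{j+1}^0 \neq 0 \Rightarrow N_j^0 \neq 0$, and starting from $N_{\deg N}^0 \neq 0$ a downward induction gives $N_j^0 \neq 0$ for every $0 \le j \le \deg N$. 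Hence $h_{k+1} > h_k$ exactly when $k+1 \le \deg N$, i.e.\ $k < \deg N$.

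The main obstacle is the graded--space claim for $N$, which rests on combining the homogeneous--component orthogonality built into the reduction process of \cite{Sauer01} with the identification $V_k(H) = \Lambda(I_\Omega) \cap \Pi_k^0$ valid for an H--basis. Once this is in place, the Hilbert--function dichotomy follows from the simple propagation argument for the homogeneous ideal $\Lambda(I_\Omega)$ and a straightforward dimension count.
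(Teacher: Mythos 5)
Your proof is correct and follows essentially the same route as the paper: both reduce $h_k(I_\Omega)$ to $\dim(N\cap\Pi_k)$ via \eqref{eq:RemainderSpace2}, both invoke the homogeneous decomposition \eqref{eq:RemainderSpace2a}, and both use the ideal property of $\Lambda(I_\Omega)$ together with $\Pi_{j+1}^0 = z_1\Pi_j^0+\cdots+z_s\Pi_j^0$ to propagate the vanishing of the homogeneous slices of $N$ (the paper iterates upward from $h_k=h_{k+1}$; your contrapositive downward induction from $N_{\deg N}^0\neq 0$ is the same argument). Your explicit grading of $N$ is a slightly more detailed bookkeeping of what the paper states implicitly and later records as Lemma~\ref{L:RjReduced}.
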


\begin{proof}
  From (\ref{eq:RemainderSpace2}) it follows that
  $$
  h_k ( I_\Omega ) = \dim \Pi_k /I_\Omega = \dim ( N \cap \Pi_k ),
  $$
  which is clearly monotonically increasing in $k$ and constant once
  $k \ge \deg N$. 
  Now suppose that
  for some $k$ we have $h_k ( I_\Omega ) = h_{k+1} ( I_\Omega
  )$, then, by (\ref{eq:RemainderSpace2a}), it follows that $\Lambda (
  N ) \cap \Pi_{k+1} = \{ 0 \}$, hence $\Lambda ( I_\Omega ) =
  \Pi_{k+1}^0$ and since $I_\Omega$ is an ideal, the forms
  $$
  \sum_{j=1}^s (\cdot)_j \, \Lambda ( I_\Omega ) = \sum_{j=1}^s
  (\cdot)_j \, \Pi_{k+1}^0 = \Pi_{k+2}^0
  $$
  also generate $\Pi_{k+2}^0$, hence $\Lambda ( N ) \cap \Pi_{k+2}^0 =
  \{ 0 \}$ and therefore $h_k ( I_\Omega ) = h_{k+2} (I_\Omega)$. By
  iteration we conclude that $h_k ( I_\Omega ) = h_{k+1} ( I_\Omega
  )$ implies that $h_k ( I_\Omega ) = h_{k'} ( I_\Omega )$, $k' >
  k$. In particular, this yields that $N \cap \Pi_k$ is a proper
  subspace of $N \cap \Pi_{k+1}$ as long as $k < \deg N$, from which
  (\ref{eq:ConstHilbert}) follows. \qed
\end{proof}

\noindent
We remark that
Lemma~\ref{L:ConstHilbert} can also be interpreted as the statement
that minimal degree interpolation spaces have no ``gaps''.

\subsection{Graded bases}
\label{sec:Graded}
\noindent
From now on suppose that $n > \deg N$ is chosen properly. 
The next step will be to construct a \emph{graded basis} for $\ker
\Fb_n$. To that end, we define the matrices
$$
\Fb_{n,k} := \left[ f (\alpha+\beta) :
  \begin{array}{c}
    |\alpha| \le n \\ |\beta| \le k
  \end{array}
\right] \in \RR^{d_n \times d_k}, \qquad k=0,\dots,n,
$$
and note that for $p \in \Pi_k$
\begin{equation}
  \label{eq:FnkForm}
  \Fb_n \pb = \Fb_{n,k} \pb,
\end{equation}
where, strictly speaking, the two coefficient vectors in
(\ref{eq:FnkForm}) are of different size as on the left hand side we
view $p$ as a polynomial of degree $n$ while on the right hand side it
is seen as a polynomial of degree $k$. Nevertheless, we prefer this
ambiguity, which is typical for polynomials, to introducing further
subscripts.

In particular, since $n > \deg N$, it follows that
$$
p \in (I_\Omega \cap \Pi_k) \qquad \Rightarrow \qquad 0 = \Fb_n \pb =
\Fb_{n,k} \pb.
$$

\begin{lemma}[Hilbert function]\label{L:HilbertFun}
  If $n > \deg N$ then $d_n - \dim \ker \Fb_{n,k} = h_k (I_\Omega)$,
  $k \in \NN_0$.
\end{lemma}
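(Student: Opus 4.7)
The plan is to identify $\ker \Fb_{n,k}$ with the coefficient vectors of $I_\Omega \cap \Pi_k$ via Theorem~\ref{T:FnIdeal}, then translate to the Hilbert function using $h_k(I_\Omega) = d_k - v_k(I_\Omega)$.

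First, I would exploit the identity $\Fb_n \pb = \Fb_{n,k} \pb$ of (\ref{eq:FnkForm}): for any $p \in \Pi_k$, its coefficient vector satisfies $\Fb_{n,k} \pb = 0$ (read in $\RR^{d_k}$) if and only if the same coefficient vector, extended by zeros in the degrees $k+1,\dots,n$, lies in $\ker \Fb_n \subset \RR^{d_n}$. This bookkeeping step is exactly the ``ambiguity'' that the paper already emphasized in (\ref{eq:FnkForm}).

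Second, the hypothesis $n > \deg N$ puts us in the range where Theorem~\ref{T:FnIdeal} applies, giving $\Fb_n \pb = 0$ if and only if $p \in I_\Omega \cap \Pi_n$. Since the polynomial $p$ was chosen in $\Pi_k \subset \Pi_n$, this kernel condition is equivalent to $p \in I_\Omega \cap \Pi_k$, and the reverse inclusion is immediate from Lemma~\ref{L:Ideal=>Zero}. The coefficient-vector map is therefore a linear isomorphism between $\ker \Fb_{n,k}$ and $I_\Omega \cap \Pi_k$, so $\dim \ker \Fb_{n,k} = v_k(I_\Omega)$.

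Finally, substituting into the definition $h_k(I_\Omega) := d_k - v_k(I_\Omega)$ introduced just before Lemma~\ref{L:ConstHilbert} yields the claimed identity (equivalently, $\rank \Fb_{n,k} = h_k(I_\Omega)$). The whole argument is essentially a corollary of Theorem~\ref{T:FnIdeal}: the only technical care needed is the book-keeping between coefficient vectors viewed in $\RR^{d_k}$ versus $\RR^{d_n}$, and there is no substantive obstacle beyond that.
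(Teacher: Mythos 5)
Your proof is correct and follows essentially the same route as the paper's: both identify $\ker \Fb_{n,k}$ with $I_\Omega \cap \Pi_k$ via Theorem~\ref{T:FnIdeal} (with Lemma~\ref{L:Ideal=>Zero} for the easy direction) and then invoke the definition of $h_k$. Note that, as your computation implicitly shows, the $d_n$ in the statement should read $d_k$ --- rank--nullity for $\Fb_{n,k} \in \RR^{d_n \times d_k}$ involves the column count $d_k$, so $\rank \Fb_{n,k} = d_k - \dim\ker\Fb_{n,k} = d_k - v_k(I_\Omega) = h_k(I_\Omega)$ --- and you work with this corrected version exactly as the paper's own proof does.
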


\begin{proof}
  Since for $p \in \Pi_k$ we have $0 = \Fb_n \pb = \Fb_{n,k} \pb$ if
  and only if $p \in I_\Omega$ by Theorem~\ref{T:FnIdeal}, it follows
  that $\pb \in \ker \Fb_{n,k}$ if and only if $p \in (I_\Omega \cap
  \Pi_k)$, i.e., $(I_\Omega \cap \Pi_k) \simeq \ker \Fb_{n,k}$. Hence,
  the dimensions of the two vector spaces have to coincide. \qed
\end{proof}

\noindent
We now build a graded ideal basis in an inductive way. To that end, we
first note that $\ker \Fb_{n,0} \neq \{ 0 \}$ if and only if 
$\Fb_{n,0} = 0$ which would yield that either $1
\in I_\Omega$, i.e., $\Omega = \emptyset$ or $f_\omega = 0$, $\omega
\in \Omega$. Since both is excluded by assumption, we always have that
$\Fb_{n,0} \neq 0$. Thus, we set $\Pb_0 = []$.

Next, we consider
$k=1$ and let $\pb_1^1,\dots,\pb^1_{v_1 - v_0}$ be a basis
of $\ker \Fb_{n,1}$ which we arrange into a matrix
$\Pb_1 := [ \pb_1^1,\dots,\pb^1_{v_1 - v_0} ] \in \RR^{d_1 \times v_1
  - v_0}$, where $v_k$ is defined in \eqref{eq:HilbertDef}.
If $\ker \Fb_{n,1}$ is trivial, i.e., $\ker \Fb_{n,1} = \{
0 \}$, we have $v_1 = v_0 = 0$ and
write $\Pb_1 = []$. Since $\Omega$ is finite, hence $I_\Omega \neq \{
0 \}$, there exists some index $k_0$ such that $\ker \Fb_{n,k} \neq \{ 0
\}$, $k \ge k_0$.

Now suppose that we have constructed matrices
$\Pb_j \in  \RR^{d_j \times w_j}$, $w_j := v_j - v_{j-1}$, $j =
1,\dots,k$, $k \ge k_0$, such that the columns of $\Pb_0,\dots,\Pb_k$
form a basis of $\ker \Fb_{n,k} \neq \{ 0 \}$. We arrange these bases
into the block upper triangular matrix 
\begin{equation}
  \label{eq:KnDef}
  \Kb_k := \left[ \Pb_1,\dots,\Pb_k \right] \in \RR^{d_{k}
    \times v_{k}}
\end{equation}
from which we will derive $P_{k+1}$ and eventually $K_{k+1}$ in an
inductive step.
Like above, we use the convention that ``empty columns'' $\Pb_j = []$ are
omitted and that the column vectors $\pb^j_\ell \in \RR^{d_j}$ of
$\Pb_j$, $\ell = 1,\dots,w_j$, are embedded
into $\RR^{d_k}$ by appending zeros which is again consistent with the
way how polynomials of degree $< k$ are embedded in
$\Pi_k$.

To advance the construction to $k+1$,
let $\tilde \Pb_{k+1} \in \RR^{d_{k+1} \times v_{k+1}}$ be a basis of
the $v_{k+1}$ dimensional subspace $\ker \Fb_{n,k+1}$ of
$\RR^{d_{k+1}}$, determined, for example by means for an SVD
\begin{equation}
  \label{eq:FnkSVD}
  \Fb_{n,k} = \Ub \Sigma \Vb^T,  
\end{equation}
where the rows of $\Vb$ that correspond to zero or negligible singular
values are even an orthonormal basis of the subspace. Recall that this
is also the standard procedure for numerical rank computation which
was also used to determine \emph{approximate ideals},
cf. \cite{heldt09:_approx_comput,Sauer07a}.

Then, $\Kb_k \, \RR^{v_k} = \ker
\Fb_{n,k} \subseteq \ker \Fb_{n,k+1} = \tilde \Pb_{k+1} \,
\RR^{v_{k+1}}$ implies that there exists a matrix $\tilde \Xb \in
\RR^{v_{k+1} \times v_k}$ such that
$$
\left[
  \begin{array}{c}
    \Kb_k \\ 0
  \end{array}
\right] = \tilde \Pb_{k+1} \, \tilde \Xb
$$
and since $\rank \tilde \Pb_{k+1} = v_{k+1}$, the
\emph{pseudoinverse} or \emph{Moore--Penrose inverse} $\tilde
\Pb_{k+1}^+$ of this matrix is a left inverse of $\tilde \Pb_{k+1}$, hence
\begin{equation}
  \label{eq:XPkComp}
  \tilde \Xb = \tilde \Pb_{k+1}^+ \tilde \Pb_{k+1} \, \tilde \Xb
  = \tilde \Pb_{k+1}^+ \left[
  \begin{array}{c}
    \Kb_k \\ 0
  \end{array}
\right].
\end{equation}
Now we can complete the columns of $\tilde \Xb$ orthogonally to a
basis of $\RR^{d_{k+1}}$ by computing a $QR$--factorization
$$
\tilde \Xb = \Qb \left[
  \begin{array}{c}
    \Rb \\ 0
  \end{array}
\right], \qquad \Qb^T \Qb = \Ib, \qquad \Qb =: [ \Qb_1,\Qb_2 ]
$$
so that the last $w_{k+1}$ columns $\Qb_2$ of $\Qb$ complete $\Xb$
orthogonally to a basis of $\RR^{d_{k+1}}$: $\Qb_2^T \tilde \Xb = 0$ and
$\Xb = [ \tilde \Xb,
\Qb_2 ] \in \RR^{v_{k+1} \times v_{k+1}}$ is nonsingular. Setting
$\Pb_{k+1} = \tilde \Pb_{k+1} \Qb_2$
thus yields the \emph{graded completion}
\begin{equation}
  \label{eq:Kk+1Formula}
  \Kb_{k+1} = [ \Pb_1,\dots,\Pb_k,\Pb_{k+1} ] = \widetilde \Pb_{k+1}
  \Xb \in \RR^{d_{k+1} \times v_{k+1}}.
\end{equation}
This bit of Linear Algebra has an interesting ideal theoretic
interpretation concerning the sets $P_j$ of polynomials corresponding
to the coefficient matrices $\Pb_j$:
$$
P_j := \{ p \in \Pi : \pb \in \Pb_j \}.
$$

\begin{theorem}
  If $n \ge k > \deg N$ then $P_0,\dots,P_k$ form an H--basis for
  $I_\Omega$.
\end{theorem}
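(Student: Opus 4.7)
The plan is to exploit the classical characterization, immediate from the reduction process of Section~2.2 and the theorem of \cite{Sauer01} quoted above, that a finite set $H \subset I_\Omega$ is an H--basis for $I_\Omega$ if and only if $\{\Lambda(h) : h \in H\}$ generates the homogeneous ideal $\Lambda(I_\Omega)$ in $\Pi$. Reducing to this leading--form statement converts the problem into a graded dimension count that dovetails exactly with how the blocks $\Pb_j$ were constructed.

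First I would note, using Theorem~\ref{T:FnIdeal} together with $n \ge k > \deg N$, that the columns of $\Kb_k$ form a basis of $I_\Omega \cap \Pi_k$ and, more precisely, that the $w_j = v_j - v_{j-1}$ columns of $\Pb_j$ descend to a basis of the quotient $(I_\Omega \cap \Pi_j)/(I_\Omega \cap \Pi_{j-1})$; this is forced by the orthogonal completion step $\Qb_2^T \tilde\Xb = 0$ in (\ref{eq:Kk+1Formula}). The assignment $p + (I_\Omega \cap \Pi_{j-1}) \mapsto \Lambda(p)$ is a well--defined linear isomorphism from this quotient onto $\Lambda(I_\Omega) \cap \Pi_j^0$, whose dimension is $w_j$ by (\ref{eq:RemainderSpace2a}) and Lemma~\ref{L:HilbertFun}. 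Consequently $\Lambda(P_j)$ is a basis of $\Lambda(I_\Omega) \cap \Pi_j^0$ for every $j \le k$, and in particular every polynomial in $P_j$ has degree exactly $j$.

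With this in hand, the graded pieces of $\Ideal{\bigcup_{j \le k} \Lambda(P_j)}$ can be compared with those of $\Lambda(I_\Omega)$. In degrees $m \le k$, equality is already supplied by the previous paragraph, since $\Lambda(P_m)$ alone spans $\Lambda(I_\Omega) \cap \Pi_m^0$. For $m > k$, Lemma~\ref{L:ConstHilbert} together with the direct sum (\ref{eq:RemainderSpace2a}) yields $\Lambda(I_\Omega) \cap \Pi_m^0 = \Pi_m^0$; in particular $\Lambda(P_k)$ spans the whole of $\Pi_k^0$, and the elementary identity $\Pi_m^0 = \Pi_{m-k}^0 \cdot \Pi_k^0$ gives $\Pi_m^0 \subseteq \Pi_{m-k}^0 \cdot \Span \Lambda(P_k)$. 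The reverse inclusion $\Ideal{\bigcup_{j} \Lambda(P_j)} \subseteq \Lambda(I_\Omega)$ is trivial since each $P_j \subset I_\Omega$.

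The main obstacle I anticipate is the grading step: one must argue that the purely numerical $\Qb_2$--completion in (\ref{eq:Kk+1Formula}) genuinely produces polynomials whose leading form has degree exactly $j$, that is, the new columns of $\Pb_j$ cannot accidentally collapse to lower degree. Once this is pinned down, the remainder is the bookkeeping of two dimension identities, $\dim(\Lambda(I_\Omega) \cap \Pi_j^0) = w_j$ in general and $\dim(\Lambda(I_\Omega) \cap \Pi_j^0) = d_j^0$ for $j > \deg N$, together with the observation that as soon as $\Lambda(P_k) = \Pi_k^0$ the ideal closure automatically produces all of $\Pi_m^0$ for $m \ge k$, which is exactly what the H--basis criterion demands.
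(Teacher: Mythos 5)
Your proof is correct and follows essentially the same route as the paper's: both reduce to the leading--form characterization $\Lambda(I_\Omega) = \Lambda\left(\Ideal{P_1,\dots,P_k}\right)$ of H--bases and split into degrees $\le k$ (handled by the graded kernel basis $\Kb_k$) and degrees $> k$ (handled by $\Lambda(P_k)$ spanning $\Pi_k^0$, which follows from Lemma~\ref{L:ConstHilbert} since $k > \deg N$). The one obstacle you flag is settled by observing that a nontrivial linear combination of the columns of $\Pb_j$ whose degree--$j$ homogeneous part vanished would lie in $\ker \Fb_{n,j-1} = \Span \Kb_{j-1}$, contradicting the linear independence of the new columns modulo $\Span \Kb_{j-1}$ guaranteed by the orthogonal completion.
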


\begin{proof}
  Let $p \in I_\Omega$. If $\deg p \le k$ then
  $$
  p \in I_\Omega \cap \Pi_n = \ker \Fb_{n,k} = \Span \Kb_k,
  $$
  hence
  $$
  \pb = \sum_{j=0}^{\deg p} \Pb_j \, \cb_j \qquad \mbox{i.e.} \qquad
  p = \sum_{j=0}^{\deg p} \sum_{\ell=0}^{w_j} c_{j,\ell} p_\ell^j
  $$
  for appropriate coefficients $\cb_j = ( c_{j,\ell} :
  \ell=0,\dots,w_j ) \in \RR^{w_j}$, $j=0,\dots,\deg p$. In particular,
  $$
  \Lambda (p) = \sum_{\ell=0}^{w_{\deg p}} c_{\deg p,\ell} \,
  \Lambda (p_\ell^{\deg p}) \in \Span \Lambda \left( P_{\deg p}
  \right)
  \subset \Lambda( \Ideal{P_1,\dots,P_k} ).
  $$
  In the case $\deg p > k$, we first note that $h_k (I_\Omega) =
  h_{k-1} (I_\Omega)$ by Lemma~\ref{L:ConstHilbert} since $k > \deg
  N$. Hence, $\Lambda ( P_k )$ spans $\Pi_k^0$ so that the polynomials
  $$
  \left\{ (\cdot)^\alpha p : |\alpha| = \deg p - k, \, p \in P_k \right\}
  $$
  span $\Pi_{\deg p}^0$. Hence,
  $$
  \Lambda (p) = \sum_{|\alpha| = \deg p - k} \sum_{j=0}^{w_k} c_{\alpha,j}
  (\cdot)^\alpha \, \Lambda ( p_j^k )
  = \sum_{|\alpha| = \deg p - k} \sum_{j=0}^{w_k} c_{\alpha,j}
  \Lambda \left( (\cdot)^\alpha \, p_j^k \right),
  $$
  that is $\Lambda(p) \in \Lambda \left(\Ideal{P_1,\dots,P_k}
  \right)$.

  Combining the two cases and noting that
  $\Ideal{P_1,\dots,P_k} \subset \Ideal{I_\Omega}$ trivially yields
  $\Lambda ( I_\Omega ) \supset \Lambda \left( \Ideal{P_1,\dots,P_k}
  \right)$ we thus have that
  \begin{equation}
    \label{eq:LambdaIdHB}
    \Lambda ( I_\Omega ) = \Lambda \left( \Ideal{P_1,\dots,P_k} \right),
  \end{equation}
  which is a well--known characterization of H--bases,
  cf. \cite{GroebnerII} or, specifically,
  \cite[Proposition~4.2]{Sauer01}.
  \qed
\end{proof}

\begin{remark}\label{rem:HBasis}
  The H--basis $[ P_1,\dots,P_k ]$ is by far not minimal, but contains
  many redundant polynomials. Indeed, if $\Pb_j \neq []$ at some
  level, then the polynomials $(\cdot)^\alpha P_j$, $|\alpha| = k-j$, belong to
  $I_\Omega$ as well and could be removed from the $P_k$ without
  losing the H--basis property. However, we will see soon that the
  redundant ideal basis we generated so far eases the following
  computations significantly.
\end{remark}

\noindent
The next step is to construct a homogeneous
basis for the inverse system $N = r(\Pi)$. To that end, we return to
the inner product $(\cdot,\cdot)$ on $\Pi \times \Pi$ defined in
(\ref{eq:misInnerProd}). The goal is
to construct homogeneous bases $N_j \subseteq \Pi_j^0$, $j=0,\dots,k$, such that
$\left( N_j, \Lambda(P_j) \right) = 0$ and $\Pi_j^0 = \Span N_j \oplus
\Span \Lambda (P_j)$, $j=0,\dots,\deg N$. Again, we
compute a $QR$ factorization, namely
\begin{equation}
  \label{eq:MLPfact}
  \Lambda ( \Pb_j ) = \Qb \left[
    \begin{array}{c}
      \Rb \\ 0
    \end{array}
  \right] =: [ \Qb_{j,1}, \Qb_{j,2} ] \left[
    \begin{array}{c}
      \Rb_j \\ 0
    \end{array}
  \right], \qquad \Qb_{j,1} \in \RR^{d_j^0 \times w_j}, \,  \Qb_{j,2} \in
  \RR^{d_j^0 \times d_j^0 - w_j},  
\end{equation}
where $\Rb$ is nonsingular since the leading terms in $\Lambda (P_j)$ are
linearly independent by construction. Setting
\begin{equation}
  \label{eq:NbDef}
  \Nb_j = \Qb_{j,2},
\end{equation}
we note that
$$
\left( N_j, \Lambda( P_j ) \right) = \Nb_j^T \Lambda (\Pb_j )
= \Nb_j^T \, [ \Qb_{j,1}, \Qb_{j,2} ] \left[
  \begin{array}{c}
    \Rb_j \\ 0
  \end{array}
\right] = [ 0, \Ib ] \left[
  \begin{array}{c}
    \Rb_j \\ 0
  \end{array}
\right] = 0_{d_j^0 - w_j, w_j},
$$
hence $N_j$ is a basis of the orthogonal complement of $\Lambda (
P_j)$ in $\Pi_j^0$.

\subsection{Reduced polynomials}
For a more explicit description of the space  $N = r(\Pi)$,
we continue with a definition.

\begin{definition}
  A polynomial $p \in \Pi_{\deg N+1}$ is called \emph{reduced} if $p =
  \nu(p)$.
\end{definition}

\begin{lemma}\label{lem:ReducedR}
  A polynomial $p \in \Pi$ is reduced if and only if $p \in N = \nu (\Pi)$.
\end{lemma}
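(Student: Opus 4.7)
The plan is to unpack both directions using only the properties of the normal form map $\nu$ established in Theorem~\ref{T:RemainderSpace} and the reduction decomposition (\ref{eq:pDecompReduct}).

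The forward direction is immediate. If $p$ is reduced, then by definition $p = \nu(p)$, and since $\nu(p)$ by construction lies in $\nu(\Pi) = N$, we have $p \in N$. No additional input is needed here.

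For the reverse direction, suppose $p \in N = \nu(\Pi)$, so $p = \nu(q)$ for some $q \in \Pi$. I want to conclude $p = \nu(p)$, i.e., that $\nu$ acts as the identity on $N$. The strategy is to show that the difference $p - \nu(p)$ lies simultaneously in $N$ and in $I_\Omega$, and then invoke the direct sum decomposition (\ref{eq:RemainderSpace2}) to conclude that it vanishes. First, the reduction decomposition (\ref{eq:pDecompReduct}) for $p$ reads $p = \sum_{h \in H} p_h \, h + \nu(p)$, so that $p - \nu(p) = \sum_{h \in H} p_h \, h \in \Ideal{H} = I_\Omega$. Second, both $p$ and $\nu(p)$ belong to $N$, so their difference lies in $N$. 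Since $\deg p \le \deg N$ and the degree--reducing property of $\nu$ gives $\deg \nu(p) \le \deg p$, the difference lies in $N \cap \Pi_{\deg N}$. By (\ref{eq:RemainderSpace2}) applied with $n = \deg N$, the intersection $N \cap I_\Omega \cap \Pi_{\deg N}$ is trivial, hence $p - \nu(p) = 0$, which is exactly what we need. The condition $p \in \Pi_{\deg N + 1}$ in the definition of reduced is automatically satisfied on both sides, since any element of $N$ has total degree at most $\deg N$.

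The only mild obstacle is being careful that $\nu$ is well defined on all of $\Pi$ (which is guaranteed by the H--basis theorem of \cite{Sauer01} quoted above) and noting that $\nu(p)$ really does lie in $N$ with degree bounded by $\deg N$. Once these are in hand, the proof reduces to the identity $N \cap I_\Omega = \{0\}$, which is the essential content of Theorem~\ref{T:RemainderSpace}. No non--trivial computation is required.
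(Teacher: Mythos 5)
Your proof is correct, and for the nontrivial direction it takes a genuinely different route from the paper. The paper argues locally and constructively: if $p = \nu(q) \in N$, then by the construction of the reduction each homogeneous component $p_k^0$ is already orthogonal to $V_k(H)$, so every projection step in the reduction of $p$ itself is zero, each component is reproduced as remainder, and hence $\nu(p) = p$. You instead argue globally: from the decomposition $p = \sum_{h \in H} p_h h + \nu(p)$ you get $p - \nu(p) \in \Ideal{H} = I_\Omega$, while $p - \nu(p) \in N$ by linearity of $N$, and the direct sum (\ref{eq:RemainderSpace2}) forces $p - \nu(p) = 0$; in effect you are showing that $\nu$ is the projection onto $N$ along $I_\Omega$ and is therefore idempotent. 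Both arguments are sound. The paper's version buys a little more: it exhibits concretely that the reduction algorithm passes each homogeneous component of $p$ through unchanged, which is the property actually exploited later (e.g.\ in the proof of Lemma~\ref{L:RjReduced}, where reproduction of $r_{\deg N}^0$ in the first reduction step is used). Your version is shorter and rests only on the uniqueness half of Theorem~\ref{T:RemainderSpace}, i.e.\ on $N \cap I_\Omega = \{0\}$; it does implicitly use that $N$ is a linear space and that $H$ is an H--basis so that $\Ideal{H} = I_\Omega$, both of which the paper has established, and your handling of the degree restriction $p \in \Pi_{\deg N + 1}$ in the definition of ``reduced'' is also fine.
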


\begin{proof}
  Let $H$ be an H--basis for $I_\Omega$. If $p \in N$,
  hence $p = \nu (q)$ for some $q$, we have that that $(p_k^0, V_k (H) ) = 0$
  for any homogeneous component $p_k^0$ of $p$, and it follows that
  $\nu_k^0 (p) = p_k^0$, $k=0,\dots,\deg p$, hence $\nu (p) = p$ and
  thus any polynomial in $N$ is reduced. Conversely, $p = \nu (p)$
  trivially implies that $p \in \nu (\Pi) = N$. \qed
\end{proof}

\begin{lemma}\label{L:RjReduced}
  With the matrices $\Nb_j = \Qb_{j,2}$ from (\ref{eq:MLPfact}) we have
  \begin{equation}
    \label{eq:RjReduced}
    N = \bigoplus_{j=0}^{\deg N} \Span N_j.  
  \end{equation}
\end{lemma}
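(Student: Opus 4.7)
My plan is to identify each $\Span N_j$ with the homogeneous component $N \cap \Pi_j^0$, and then conclude via the grading of $N$. The first task is thus to verify that $N$ is itself graded, i.e.\ $N = \bigoplus_{j=0}^{\deg N}(N \cap \Pi_j^0)$. For $r \in N$, I would decompose $r = \sum_j r_j^0$ into homogeneous components. By \eqref{eq:Remainderorthog}, each $r_j^0$ is orthogonal to $\Lambda(I_\Omega \cap \Pi_j)$, which is precisely the condition characterizing reduced forms of pure degree $j$; hence $\nu(r_j^0) = r_j^0$, so $r_j^0 \in N$ by Lemma~\ref{lem:ReducedR}.

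Next, I would observe that the decomposition \eqref{eq:RemainderSpace2a} is orthogonal with respect to $(\cdot,\cdot)$: pairing a homogeneous element of $N$ of degree $j$ with any element of $\Lambda(I_\Omega) \cap \Pi_j^0$ vanishes by \eqref{eq:Remainderorthog}. Hence $\Lambda(N) \cap \Pi_j^0$ is the orthogonal complement of $\Lambda(I_\Omega) \cap \Pi_j^0$ in $\Pi_j^0$, and since a homogeneous element of $N$ coincides with its own leading form, $N \cap \Pi_j^0 = \Lambda(N) \cap \Pi_j^0$. By the construction \eqref{eq:NbDef}, $\Span N_j$ is likewise the orthogonal complement of $\Span \Lambda(P_j)$ in $\Pi_j^0$, so everything reduces to the identity
\[
\Span \Lambda(P_j) = \Lambda(I_\Omega) \cap \Pi_j^0, \qquad j = 0, \ldots, \deg N.
\]

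I expect this identity to be the main obstacle. The inclusion $\subseteq$ is immediate once I note that every polynomial in $P_j$ has total degree exactly $j$, which follows because the block $\Pb_j$ arises from the orthogonal completion $\Qb_2$ in \eqref{eq:Kk+1Formula}: its columns lie outside the embedded image of $\ker \Fb_{n,j-1}$ and hence have linearly independent top-degree parts. For the reverse inclusion I would argue by dimension counting. On one side, $\dim \Span \Lambda(P_j) = w_j$ by the nonsingularity of $\Rb_j$ in \eqref{eq:MLPfact}. On the other, combining the grading of $N$ established in the first step with Lemma~\ref{L:HilbertFun} gives $\dim(N \cap \Pi_j^0) = h_j(I_\Omega) - h_{j-1}(I_\Omega) = d_j^0 - w_j$, whence $\dim(\Lambda(I_\Omega) \cap \Pi_j^0) = w_j$ by \eqref{eq:RemainderSpace2a}. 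Equality of subspaces then follows from equality of dimensions, and summing the resulting identities $\Span N_j = N \cap \Pi_j^0$ over $j$ yields the direct sum decomposition~\eqref{eq:RjReduced}.
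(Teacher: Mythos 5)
Your argument is correct, but it travels a different road than the paper. The paper's proof works directly with the reduction algorithm: for $\supseteq$ it observes that $(r_j^0,\Lambda(P_j))=0$ forces each homogeneous component to be reproduced by the reduction, so $r$ is reduced and hence lies in $N$ by Lemma~\ref{lem:ReducedR}; for $\subseteq$ it peels off the homogeneous components of a reduced polynomial degree by degree, concluding $r_j^0\perp\Span\Lambda(P_j)$, i.e.\ $r_j^0\in\Span N_j$. You instead make explicit the subspace identity $\Span\Lambda(P_j)=\Lambda(I_\Omega)\cap\Pi_j^0$ and prove it by a dimension count, combining Lemma~\ref{L:HilbertFun} with the homogeneous decomposition \eqref{eq:RemainderSpace2a}, after first establishing that $N$ is graded via \eqref{eq:Remainderorthog}. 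It is worth noting that the paper's phrase ``reproduced in the reduction modulo the H--basis'' silently uses the very fact you prove, namely that the degree-$j$ projection space $V_j(P)$ is exactly $\Span\Lambda(P_j)$ and not something larger containing shifts of lower-degree basis elements; this holds because of the deliberately redundant construction of the $P_j$ (cf.\ Remark~\ref{rem:HBasis}), and your dimension argument supplies an independent verification of it. What the paper's route buys is brevity and the fact that it never needs the Hilbert function; what your route buys is that the structural identity between the leading forms of the computed basis and $\Lambda(I_\Omega)$ is isolated and certified quantitatively, at the cost of invoking $n>\deg N$ through Lemma~\ref{L:HilbertFun}. Two small presentational points: for $j=0$ you should state the convention $h_{-1}=0$, and your identity $N\cap\Pi_j^0=\Lambda(N)\cap\Pi_j^0$ needs the grading of $N$ from your first step for the inclusion $\supseteq$, not merely the remark that homogeneous polynomials equal their leading forms.
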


\begin{proof}
  Let
  $$
  r = \sum_{j=0}^{\deg N} r_j^0, \qquad r_j \in \Span N_j,
  $$
  that is, $\rb_j^0 = \Nb_j \, \cb_j$, $\cb_j \in \RR^{d_j^0-w_j}$. Then
  $$
  ( r_j^0,\Lambda (P_j) ) = \cb_j^T \Nb_j^T \Lambda (\Pb_j ) = 0,
  $$
  hence $r_j$ is reproduced in the reduction modulo the H--basis $[
  P_1,\dots,P_k ]$ and therefore $r$ is reduced, which shows that the
  inclusion $\supseteq$ holds in (\ref{eq:RjReduced}). Conversely,
  suppose that $r = r_0^0 + \cdots + r_{\deg N}^0$ is reduced. Then
  reproduction of the homogeneous component $r_{\deg N^0}$ in
  first reduction step yields that
  $$
  r_{\deg N}^0 \perp \Span P_{\deg N} \qquad \mbox{i.e.} \qquad r_{\deg
    N}^0 \in \Span N_{\deg N},
  $$
  and an iterative application of this reduction yields that $r$ must
  be contained in the space on the right hand side of
  (\ref{eq:RjReduced}), hence also $\subseteq$ is valid there. \qed
\end{proof}

\noindent
With the H--basis $P = [ P_1,\dots,P_m ]$, $m := \deg N+1$, the reduction of
polynomials from $\Pi_m$ simplifies significantly. Since $\Lambda
(\Pi_k)$ spans $\Lambda ( I_\Omega \cap \Pi_k ) = V_k ( P )$, the
orthogonal projection of $\Lambda (p)$ onto $V_k (P)$, $k := \deg p$,
can be written as 
$$
\Lambda (P_k ) \cb := \sum_{\ell=0}^{w_k} c_\ell \, \Lambda (p_\ell^k),
$$
or, in terms of coefficient vectors $\Lambda ( \Pb_k
) \cb$, where, as known from standard least squares approximation,
cf. \cite{GolubvanLoan96},
\begin{equation}
  \label{eq:cCoeffDef2}
  \cb = \Rb_k^{-1} ( \Qb_{k,1} )^T \Lambda (\pb),
\end{equation}
which can be computed in a stable way by solving $\Rb_k \cb =
(\Qb_{k,1})^T \Lambda (\pb)$.

Therefore, we can already compute the reduction modulo $I_\Omega$ for given
$p \in \Pi_m$ in the following simple manner.

\begin{algorithm}[Reduction]~
  \begin{enumerate}
  \item While $p \neq 0$
    \begin{enumerate}
    \item Set $k = \deg p$,
    \item Compute $\cb = \Rb_k^{-1} ( \Qb_{k,1} )^T \Lambda (\pb)$,
    \item Set
      $$
      \rb_k^0 := \Lambda (\pb) - \Lambda (\Pb_k) \, \cb.
      $$
    \item Replace $\pb$ by
      $$
      \pb - \Pb_k \, \cb - \rb_k^0.
      $$
    \end{enumerate}
  \end{enumerate}  
\end{algorithm}

To summarize what we obtained so far: Based on the evaluation matrices
$\Fb_{n,k}$ we constructed a graded H--basis for the ideal and, at the same
time, a graded homogeneous basis for the inverse system $N$. 

\subsection{Multiplication tables}
Now we are ready to compute the points $z_\Omega =
e^\omega$, $\omega \in \Omega$, and therefore also the frequencies
$\Omega$. To that end, we make use of the eigenvalue method and
multiplication tables as introduced in \cite{Stetter95}, see also
\cite{MoellerStetter95}. This is based on observing that
multiplication by coordinate polynomials modulo ideal, i.e.,  the operation
$r \mapsto \nu ( (\cdot)_j r )$, $r \in N$, $j=1,\dots,s$, is an
automorphism on $N$ and thus can be represented by a matrix $\Mb_j \in
\RR^{\dim N \times \dim N}$. Since they represent multiplication,
the matrices form a commuting family and are the multivariate
extension of the \emph{Frobenius companion matrix}. The following
result, attributed to Sticklberger in \cite{CohenCuypersSterk99}, was
brought to wider attention in \cite{Stetter95}. Since the proof is
very short, simple and elementary, we repeat it here for the sake of
completeness.

\begin{theorem}
  Let $N$ be a normal form space modulo $I_\Omega$ and let $\Mb_j$,
  $j=1,\dots,s$, be the multiplication tables with respect to a basis
  of $N$. Then the eigenvalues of $\Mb_j$ are $(z_\omega)_j =
  e^{\omega_j}$, $\omega \in \Omega$, and the associated common
  eigenvectors are the coefficient vectors of $\ell_\omega$ with
  respect to this basis.
\end{theorem}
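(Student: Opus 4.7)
The plan is to work directly in the Lagrange basis $\{\ell_\omega : \omega \in \Omega\}$ of $N$ from \eqref{eq:ellomegaDef}, since in this basis point evaluation at $z_\omega$ is simply coordinate extraction. By Theorem~\ref{T:RemainderSpace}, for any $p \in \Pi$ the normal form $\nu(p)$ is the unique element of $N$ satisfying $\nu(p)(z_\omega) = p(z_\omega)$ for all $\omega \in \Omega$. In particular, every $r \in N$ admits the representation $r = \sum_{\omega \in \Omega} r(z_\omega)\,\ell_\omega$.

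The central computation is then: for fixed $j \in \{1,\dots,s\}$ and $\omega \in \Omega$, evaluate $(\cdot)_j \ell_\omega$ at each node $z_{\omega'}$. Using \eqref{eq:ellomegaDef} this yields $(z_{\omega'})_j \, \ell_\omega(z_{\omega'}) = (z_{\omega'})_j \, \delta_{\omega,\omega'} = (z_\omega)_j\, \delta_{\omega,\omega'}$. By the uniqueness clause of Theorem~\ref{T:RemainderSpace}, the unique element of $N$ taking these values is $(z_\omega)_j\,\ell_\omega$, hence
$$
\nu\bigl((\cdot)_j\, \ell_\omega \bigr) \;=\; (z_\omega)_j\, \ell_\omega \;=\; e^{\omega_j}\, \ell_\omega .
$$
Reading this identity as the action of the multiplication operator $\Mb_j : r \mapsto \nu((\cdot)_j r)$ on the basis vector $\ell_\omega$, we see that $\ell_\omega$ is an eigenvector of $\Mb_j$ with eigenvalue $e^{\omega_j}$, simultaneously for every $j = 1,\dots,s$.

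To conclude, I would invoke $\dim N = \#\Omega$ (from Theorem~\ref{T:RemainderSpace}) together with the linear independence of the $\ell_\omega$ (immediate from \eqref{eq:ellomegaDef}): the Lagrange fundamentals form a basis of $N$ consisting of common eigenvectors of $\Mb_1,\dots,\Mb_s$, so the spectrum of $\Mb_j$ is exactly $\{e^{\omega_j} : \omega \in \Omega\}$ and no other eigenvalues can occur. The only subtlety worth checking is that multiplication by $(\cdot)_j$ modulo $I_\Omega$ really is a well-defined linear endomorphism of $N$; but this is immediate since $I_\Omega$ is an ideal (so $(\cdot)_j \cdot I_\Omega \subseteq I_\Omega$), making $\nu((\cdot)_j \,\cdot\,)$ well-defined on the quotient $\Pi/I_\Omega \simeq N$. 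There is no real obstacle beyond this routine verification; the argument is essentially a reinterpretation of Lagrange interpolation as a simultaneous diagonalisation of the coordinate-multiplication operators.
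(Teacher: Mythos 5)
Your proof is correct and follows essentially the same route as the paper's: both express the normal form as the Lagrange interpolant $\nu(p)=\sum_{\omega}p(z_\omega)\ell_\omega$ and compute $\nu((\cdot)_j\ell_\omega)=(z_\omega)_j\ell_\omega$ directly from $\ell_\omega(z_{\omega'})=\delta_{\omega,\omega'}$. Your added remarks on well-definedness and on the spectrum being exhausted by the $\#\Omega$ independent eigenvectors are routine but harmless supplements to the paper's argument.
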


\begin{proof}
  Since $N$ is an interpolation space, we can write the normal forms
  as interpolants,
  $$
  \nu (p) = \sum_{\omega \in \Omega} p(z_\omega) \, \ell_\omega,
  \qquad p \in \Pi,
  $$
  where, as in \eqref{eq:ellomegaDef}, $\ell_\omega \in N$ is the
  unique solution of $\ell_\omega ( z_{\omega'} ) =
  \delta_{\omega,\omega'}$, $\omega,\omega' \in \Omega$.
  Hence, for $\omega \in \Omega$ and $j \in \{ 1,\dots,s \}$,
  $$
  \nu \left( (\cdot)_j \ell_\omega \right) = \sum_{\omega' \in \Omega}
  (z_{\omega'})_j \ell_\omega (z_{\omega'}) \, \ell_\omega
  = (z_\omega)_j \, \ell_\omega,
  $$
  because $\ell_\omega (z_{\omega'}) = \delta_{\omega,\omega'}$.
  \qed
\end{proof}

\noindent
The matrices $\Mb_j$ have a block structure
\begin{equation}
  \label{eq:MbBlockStruct}
  \Mb_j = \left[
    \begin{array}{ccccc}
      \Mb_{0,0}^j & \Mb_{0,1}^j & \dots & \Mb_{0,m-1}^j & \Mb_{0,m}^j \\
      \Mb_{1,0}^j & \Mb_{1,1}^j & \dots & \Mb_{1,m-1}^j & \Mb_{1,m}^j \\
      & \Mb_{2,1}^j & \dots & \Mb_{2,m-1}^j & \Mb_{2,m}^j \\
      & & \ddots & \vdots & \vdots \\
      & & & \Mb_{m,m-1}^j & \Mb_{m,m}^j
    \end{array}
  \right], \qquad \Mb_{k,\ell}^j \in \RR^{d_k^0 \times d_\ell^0},
\end{equation}
and can be conveniently computed by means of the
matrices $\Nb_j$, $j=0,\dots,m := \deg N$, of homogeneous basis
polynomials that were constructed in the preceding sections. The
matrices
$$
\Lb_{k,j} = \sum_{|\alpha| = k} \eb_{\alpha + \epsilon_j} \eb_\alpha^T
\in \RR^{d_{k+1}^0 \times d_k^0},
\qquad k \in \NN_0, \, j=1,\dots,s,
$$
that represent multiplication of a homogeneous polynomial of degree $k$
by the monomial $(\cdot)_j$ on coefficient level, are a well--known
tool in the study of multivariate orthogonal polynomials, as well
cf. \cite{Xu94}. For $k=0,\dots,m$, one has to reduce the polynomials
corresponding to the columns of $\Lb_{k,j} \Nb_k$. The first reduction
step gives 
$$
\Rb_{k+1}^0
= \left( \Ib - \Lambda ( \Pb_{k+1} ) \Rb_{k+1}^{-1}
  \Qb_{k+1,1}^T \right)  \Lb_{k,j} \Nb_k
$$
which yields the matrix block
\begin{equation}
  \label{eq:MatColVec}
  ( \Mb_j )_{k+1,k} = \Nb_{k+1}^T \Rb_{k+1}^0 =  \Nb_{k+1}^T \left( \Ib -
    \Lambda ( \Pb_{k+1} ) \Rb_{k+1}^{-1} \Qb_{k+1,1}^T \right)
  \Lb_{k,j} \Nb_k.
\end{equation}
After this first reduction, we have to continue with a standard
nonhomogeneous reduction starting with the matrix
$$
\Tb_{k+1} = \left( \left[
    \begin{array}{c}
      \Lambda ( \Pb_{k+1} ) \\ 0       
    \end{array}
    \right] - \Pb_{k+1} \right)  \Rb_{k+1}^{-1} \Qb_{k+1,1}^T
  \Lb_{k,j} \Nb_k.
$$
Denoting for $\Tb = [ \tb_1,\dots,\tb_r ]$ the $\ell$--homogeneous
part of this matrix by
$$
( \Tb )_\ell^0 = \left[ ( \tb_1 )_\ell^0, \dots, ( \tb_1 )_\ell^0
\right] \in \RR^{d_\ell^0 \times r},
$$
we get, for $\ell = k,k-1,\dots,0$ the recurrence
\begin{equation}
  \label{eq:RTellRek1}
  \Rb_\ell^0 = \left( \Ib - \Lambda ( \Pb_\ell ) \Rb_\ell^{-1}
    \Qb_{\ell,1}^T \right) (\Tb_{\ell+1})_\ell^0
\end{equation}
and
\begin{align}
  \nonumber
   \Tb_\ell &= \Tb_{\ell+1} - \Pb_{\ell+1} \Rb_\ell^{-1}
    \Qb_{\ell,1}^T (\Tb_{\ell+1} )_\ell^0 - \Rb_\ell^0 \\
    \label{eq:RTellRek2}
    &= \Tb_{\ell+1} - \left[
      \begin{array}{c}
        ( \Tb_{\ell+1} )_\ell^0 \\ 0
      \end{array}
    \right] -
    \left( \Pb_\ell - \left[
        \begin{array}{c}
          \Lambda ( \Pb_\ell ) \\ 0          
        \end{array}
      \right] \right) \Rb_\ell^{-1} 
    \Qb_{\ell,1}^T ( \Tb_{\ell+1} )_\ell^0.
\end{align}
In particular,
\begin{equation}
  \label{eq:MkellFormula}
  ( \Mb_j )_{\ell,k} = \Nb_{\ell}^T \Rb_{\ell}^0 =  \Nb_{\ell}^T
  \left( \Ib - \Lambda ( \Pb_\ell ) \Rb_\ell^{-1}
    \Qb_{\ell,1}^T \right) ( \Tb_{\ell+1} )_\ell^0,
\end{equation}
builds the $k$th block column of the matrix $\Mb_j$. With the
matrices $\Mb_j$ at hand, the points $z_\omega$, $\omega \in \Omega$, and
therefore also $\Omega$ can be determined by means of standard
eigenvalue methods, cf. \cite[p.~308--390]{GolubvanLoan96}.

\subsection{The coefficients}
Once the set $Z_\Omega$ is known the remaining problem of determining
the coefficients is a linear one. To solve it, we simply set up the
\emph{Vandermonde matrix} 
$$
\Vb := \left[ z_\omega^\alpha :
  \begin{array}{c}
    \omega \in \Omega \\ |\alpha| \le \deg N
  \end{array}
\right] \in \RR^{\# \Omega \times d_m}, \qquad m := \deg N.
$$
Since $N \subseteq \Pi_{\deg N}$ is an interpolation space for
$z_\Omega$, the matrix $\Vb$ has rank $\# \Omega$ and therefore the
overdetermined system
$$
\Vb^T \left[ f_\omega : \omega \in \Omega \right] = \left[ f(\alpha) :
  |\alpha| \le n \right],
$$
obtained by substituting $\alpha$ into~(\ref{eq:PronySetup}),
$|\alpha| \le n$, has a unique solution which gives the coefficients
$f_\omega$, $\omega \in \Omega$.

\subsection{The algorithm}
We can collect the building block from the preceding sections into the
algorithm to solve Prony's problem in several variables which we
formalize as follows. We start with an unknown finite set
$\Omega \subset \left( \RR + i \TT \right)^s$, and coefficients
$f_\omega \in \RR$, $\omega \in \Omega$.

\begin{algorithm}[Prony's method in several variables]\label{A:Prony}
  \begin{enumerate}
  \item Guess a number $n > \deg N$, for example $n = \# \Omega$.
  \item For $k=0,1,\dots,n$,
    \begin{enumerate}
    \item Determine $F_{n,k}$.
    \item Extend the H--basis to $\Kb_{k+1} = [ P_0,\dots,P_{k+1} ]$
      according to \eqref{eq:Kk+1Formula}.
    \item Extend the graded normal form basis to $[ \Nb_0,\dots,\Nb_k
      ]$ according to \eqref{eq:NbDef}.
    \end{enumerate} 
    until $\rank \Fb_{n,k} = \rank \Fb_{n,k+1}$.
  \item Compute the multiplication tables $\Mb_1,\dots,\Mb_s$ by means of
    \eqref{eq:MkellFormula}.
  \item Compute and match the eigenvalues to determine $z_\omega$, $\omega
    \in \Omega$.
  \item Solve the Vandermonde system to obtain the coefficients
    $f_\omega$, $\omega \in \Omega$.
  \end{enumerate}
\end{algorithm}

\noindent
Now we can summarize the preceding results as follows.

\begin{theorem}
  For any finite set $\Omega \subset ( \RR + i \TT )^s$,
  Algorithm~\ref{A:Prony} reconstructs $\Omega$ and the coefficients
  $f_\omega$ of the function
  $$
  f (x) = \sum_{\omega \in \Omega} f_\omega \, e^{\omega^T x}
  $$
  from a subset of the values $f(\alpha)$, $|\alpha| \le 2n$.
\end{theorem}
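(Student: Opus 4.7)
The plan is to check that each step of Algorithm~\ref{A:Prony} succeeds as a consequence of the results already established, and along the way to confirm the sampling budget. First note that forming $\Fb_{n,k}$ for $k=0,\dots,n$ requires only the values $f(\alpha+\beta)$ with $|\alpha|\le n$ and $|\beta|\le k\le n$, i.e. the samples $f(\gamma)$ for $|\gamma|\le 2n$, which matches the stated budget. Choosing the initial guess $n$ as large as $\#\Omega$ guarantees $n>\deg N$, since $\deg N$ is bounded above by $\dim N-1=\#\Omega-1$ (the Lagrange fundamentals $\ell_\omega$ interpolate $\#\Omega$ values in $N\subset\Pi_{\deg N}$).

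Next I would verify termination and correctness of the stopping criterion. By Lemma~\ref{L:HilbertFun}, once $n>\deg N$ one has $\rank\Fb_{n,k}=d_k-\dim\ker\Fb_{n,k}=h_k(I_\Omega)$, so the rank-constancy condition $\rank\Fb_{n,k}=\rank\Fb_{n,k+1}$ is exactly the equality $h_k(I_\Omega)=h_{k+1}(I_\Omega)$. By Lemma~\ref{L:ConstHilbert} this happens precisely at $k\ge\deg N$, so the loop stops at the smallest such $k$. At that point the inductive construction in Section~\ref{sec:Graded} shows that $[\Pb_1,\dots,\Pb_k]$ is a graded H--basis of $I_\Omega$, and \eqref{eq:NbDef} together with Lemma~\ref{L:RjReduced} shows that $\Nb_0,\dots,\Nb_k$ provides a homogeneous basis of the normal form space $N$, on which the reduction algorithm is well-defined.

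With these pieces in hand, Sticklberger's theorem applied to the multiplication tables $\Mb_1,\dots,\Mb_s$ (computed by the explicit recursions \eqref{eq:RTellRek1}--\eqref{eq:MkellFormula}) guarantees that their joint spectrum is exactly $\{z_\omega:\omega\in\Omega\}$, with the Lagrange coefficient vectors of $\ell_\omega$ as common eigenvectors. Thus the set $Z_\Omega$, and hence $\Omega=\log Z_\Omega$ modulo $2\pi i\ZZ^s$, can be read off. Finally, because $N\subset\Pi_{\deg N}$ is an interpolation space for $Z_\Omega$, the Vandermonde matrix $\Vb$ has rank $\#\Omega$, so the overdetermined linear system for the coefficients has a unique solution, recovering $f_\omega$.

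The main obstacle I foresee is the joint-eigenvalue matching across $\Mb_1,\dots,\Mb_s$: although each $\Mb_j$ has eigenvalues $\{(z_\omega)_j\}$, individual coordinates can coincide, so one cannot blindly read off $s$-tuples from separate eigendecompositions. The cleanest way to handle this is to diagonalise a single generic linear combination $\sum_j t_j\Mb_j$, whose eigenvalues $\sum_j t_j(z_\omega)_j$ are generically distinct and whose eigenvectors are the common eigenvectors $\lb_\omega$; the remaining coordinates are then obtained as Rayleigh quotients $\ell_\omega^T\Mb_j\ell_\omega/\|\ell_\omega\|^2$. Everything else is bookkeeping: combining Theorem~\ref{T:FnIdeal}, Lemmas~\ref{L:HilbertFun} and~\ref{L:ConstHilbert}, the H--basis theorem, and Sticklberger's theorem yields correctness of the algorithm, and the sample count follows from the first observation.\qed
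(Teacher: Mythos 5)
Your proposal is correct and matches the paper's intent: the paper offers no separate proof of this theorem, presenting it as a summary of the preceding results (Theorem~\ref{T:FnIdeal}, Lemmas~\ref{L:HilbertFun} and~\ref{L:ConstHilbert}, the H--basis construction, and Stickelberger's theorem), which is exactly the assembly you carry out, including the correct accounting of the sample set $|\gamma|\le 2n$ and the bound $\deg N\le\#\Omega-1$ justifying the choice $n=\#\Omega$. Your suggested fix for matching joint eigenvalues via a generic linear combination $\sum_j t_j\Mb_j$ is a sound alternative to the paper's remark, which instead filters $|\Vb_j^T\Vb_k|$ and defers multiple eigenvalues to M\"oller--Tenberg; this is an implementation detail rather than a different proof.
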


\begin{remark}
  The  number $n$ depends not only on the \emph{number} $\# \Omega$ of different
  frequencies but also on the \emph{geometry} of the points $Z_\Omega =
  e^\Omega$. If points are \emph{in general position} or \emph{generic},
  then $n$ is
  the smallest number such that $\# \Omega < d_n = {n+s \choose s}$, a
  ``safe'' choice, on the other hand is always $n = \# \Omega$. This,
  however, leads to huge matrices when the number of variables
  increases and stops being tractable quite early.
  Note
  that those generic configurations of the points $z_\omega$ are open
  and dense among all point distributions,
  cf. \cite{GascaSauer00}, hence a \emph{separation distance} between
  the points only affects the number $\deg N$ in a very marginal way,
  quite in contrast to the univariate case.
\end{remark}

\begin{remark}\label{R:RankProb}
  It is worthwhile to emphasize that the validity of the following steps
  of the algorithm rely on the proper choice of $n$. Only then the
  sequence of ranks coincides with the Hilbert function. Building
  matrices $\Fb_k$ until their rank stabilizes is \emph{not}
  sufficient. A simple example is to choose $\Omega$ in such a way
  that
  $$
  Z_\Omega = \left\{ \frac{\alpha}{2} : |\alpha| \le 2 \right\}
  $$
  is the triangular grid of order $2$ and
  $$
  \fb := \left[ f_\omega : \omega \in \Omega \right] = V_2^{-T} \left[
    \begin{array}{c}
      1 \\ \vdots \\ 1
    \end{array}
  \right], \qquad V_2 = \left[ z_\omega^\alpha :
    \begin{array}{c}
      \omega \in \Omega \\
      |\alpha| \le 2
    \end{array}
  \right].
  $$
  Then,
  $$
  f(\alpha) = \sum_{\omega \in \Omega} f_\omega z_\omega^\alpha
  = \left( V_2^T \fb \right)_\alpha = 1, \qquad |\alpha| \le 2,
  $$
  and therefore
  $$
  \Fb_0 = [ 1 ], \qquad \Fb_1 = \left[
    \begin{array}{ccc}
      f(0,0) & f(1,0) & f(0,1) \\
      f(1,0) & f(2,0) & f(1,1) \\
      f(0,1) & f(1,1) & f(0,2) 
    \end{array}
  \right] = \left[
    \begin{array}{ccc}
      1 & 1 & 1 \\
      1 & 1 & 1 \\
      1 & 1 & 1
    \end{array}
  \right]
  $$
  hence $\rank \Fb_0 = \rank \Fb_1 = 1$ while still $\deg N = 2$.
\end{remark}

\noindent
This observation has an interesting interpretation in terms of
moment problems: $\Fb_1$ is a \emph{flat extension}, see
\cite{laurent09}, of the moment sequence defined by evaluation at the
origin and therefore the measurements would give only the
representation $f = 1$. This shows that also for this approach a good
guess for $n$ is necessary to obtain correct reconstructions.

\begin{remark}
  To relate the eigenvalues of the different multiplication tables and
  to combine them into the points $z_\Omega$, one can
  make use of the \texttt{eig} function in Matlab/Octave which gives the
  respective eigenvalues and matrices $\Vb_j$, $j=1,\dots,s$, containing
  the normalized eigenvectors. If these eigenvalues are sufficiently
  well separated, filtering the matrix $\left| \Vb_j^T \Vb_k \right|$
  for a value $1$ gives a permutation that relates the eigenvalues
  appropriately. This simple trick fails in the case of multiple
  eigenvalues when
  intersections of the eigenspaces have to be computed. How to do this,
  however, has already been pointed out by M\"oller and Tenberg
  \cite{moeller01:_multiv}.  
\end{remark}

The procedure described in Algorithm~\ref{A:Prony}
has been implemented prototypically in Octave
\cite{eaton09:_gnu_octav}. The code can be downloaded for checking and
verification from
\begin{quote}
  \small
\verb!www.fim.uni-passau.de/digitale-bildverarbeitung/forschung/downloads!  
\end{quote}
All tests in the following section refer to this software.

\subsection{Comparison to existing methods}
\label{sec:comparison}
As mentioned in the introduction, the algorithm is the canonical
multivariate extension of the well--known procedure ``compute the
coefficients of the Prony polynomial as kernel of a matrix and
determine the zeros of the polynomial by means of a companion
matrix''. In this respect it can be considered an extension of the
well--known 
MUSIC \cite{schmidt86:_multip}  and ESPRIT \cite{roy89:_esprit}
methods, where zero eigenvectors of a symmetric and positive
semidefinite measurement covariance matrix are determined and set an
eigenvalue problem. A more sophisticated variant of ESPRIT can be
found in \cite{potts15:_fast_esprit}. As pointed out 
in \cite{potts13:_param_prony}, the Frobenius companion matrix of the
Prony polynomial interacts nicely with the Hankel matrix of the
samples which can be used to define a generalized eigenvalue problem for
\emph{matrix pencils}, see
\cite{hua90:_matrix_pencil_method_estim_param}. 

Attempts to the multivariate situation are more recent. One approach,
established in \cite{laurent09},
is to interpret Prony's problem as a \emph{truncated moment problem}
and to build Hankel matrices of increasing size which are then checked
for the flat extension criterion. The normal form space and the ideal
can be defined by means of \emph{border bases} which can be checked by
the commutativity of candidates for the multiplication tables. For
details see \cite{laurent09}. Unfortunately, the flat extension
approach can run into the problem pointed out in
Remark~\ref{R:RankProb}, which means that an extension can be flat in
intermediate steps as well which cannot be detected. Of course, once
the critical degree $n$ is known, the extension
becomes flat beyond that degree.
The moment problem formulation is also
used in \cite{kunisetalProny}. There, however, a ``maximal degree'' is
used which has the severe disadvantage that it does not turn the
polynomials into a \emph{graded ring} which, in turn, is very useful
for defining good graded ideal bases like Gr\"obner bases and
H--bases. As a consequence, Theorem~\ref{T:FnIdeal} in terms of the total
degree is stronger than \cite[Theorem~3.1]{kunisetalProny} in terms of
maximal ``degree'' and the matrices $T_n$ there are even
significantly larger the related $\Fb_n$; in addition,
\cite[Algorithm~1]{kunisetalProny} only computes the kernel of the
Hankel matrix but does not indicate how to obtain the common zeros of
the ideal defined by this kernel.

Another approach are \emph{projection methods}
\cite{diederichs15:_param_estim_bivar_expon_sums,plonka16:_recon_fourier,potts13:_param} used mostly
in two variables, where the function is sampled along a straight line
and the solutions of the resulting univariate Prony problems are
recombined into
a solution of the multivariate problem. In this situation, separation
of the points becomes useful here as it can be carried over to the
univariate projections.

Finally, the solvability of the multivariate Prony problem has already
been considered in \cite{buhmann97} in the context of identification
of parameters in sums of shifts of a given function, however without
giving a concrete algorithm for its solution.

Algorithm~\ref{A:Prony} differs from all the above approaches, mostly
due to its multivariate and algebraic nature. It is an algebraic
method that recovers frequencies and coefficients exactly provided
that the measurements are noiseless and that all computations could be
done exactly. On the other hand, appropriate techniques from Linear
Algebra allow for a fast and still relatively stable implementation
even in a floating point environment; as long as the affine Hilbert
function does not change, the H--basis approach based on orthogonal
projections ensures that basis and normal form space and therefore also
the multiplication tables change continuously, see
\cite{MoellerSauerSM00II}.
The construction of the
Hankel matrix by adding block columns, enforced by the ``curse of
dimension'', seems to be new even in the univariate case. The main
idea of the algorithm, namely to successively build an H--basis of the
ideal and a graded basis of the normal form space from the kernels of
the $\Fb_{n,k}$ is the core point of the multivariate algorithm. The
computation of the multiplication tables is then straightforward as
long as a reduction can be computed which can be done with good bases
for arbitrary polynomial gradings, see \cite{Sauer02a}. Closest to
this is the implicit use of border bases in the flat extension
approach from \cite{laurent09}.

\subsection{Short remarks on noisy measurements}
\label{sec:perturb}
Prony's method has a reputation for being numerically unstable with
respect to perturbed measurements
$$
\widehat f ( \alpha + \beta ) = f ( \alpha + \beta ) +
\varepsilon_{\alpha+\beta}, \qquad |\alpha| \le n, \, |\beta| \le k,
$$
yielding a perturbed matrix
\begin{equation}
  \label{eq:PertMatDef}
  \widehat \Fb_{n,k} := \Fb_{n,k} + \Eb_{n,k}.
\end{equation}
The critical spot in the algorithm is the place where the distinction
is made whether a certain polynomial belongs to the ideal or to the
normal form space. As mentioned before, this happens by considering
the SVD $\Fb_{n,k} = \Ub \Sigma \Vb^H$
from \eqref{eq:FnkSVD} and by thresholding the singular values. In the
case of $\widehat \Fb_{n,k}$ we recall the standard perturbation
result for singular values from \cite[Corollary~8.6.2]{GolubvanLoan96}
that the singular values satisfy the estimate
$$
\left| \sigma_k ( \widehat \Fb_{n,k} ) - \sigma_k ( \Fb_{n,k} )
\right| \le \sigma_1 ( \Eb_{n,k} ) = \| \Eb_{n,k} \|_2,
$$
where $\sigma_j$ denotes the singular values in descending
order. Hence, as long as the perturbation is small relative to the
conditioning of the problem, i.e.,
$$
\| \Eb_{n,k} \|_2 \le \min \{ \sigma_j (\Fb_{n,k}) : \sigma_j
(\Fb_{n,k}) \neq 0 \},
$$
the ideal structure is recovered with an appropriately adapted
threshold value, provided an estimate for the perturbation matrix
$\Eb_{n,k}$ is available. If the threshold level were set too high,
the situation would be falsely interpreted to be more generic than it
really is.

To get an idea which quantities influence the SVD of $\Fb_{n,k}$, we
recall the straightforward observation that
$$
\Fb_{n,k} =  \sum_{\omega \in \Omega} f_\omega \, \Vb_n^T
\eb_\omega \, \eb_\omega^T \Vb_k = \Vb_n^T \left( \sum_{\omega \in
    \Omega} f_\omega \eb_\omega \, \eb_\omega^T \right) \Vb_k =
\Vb_n^T \Fb_\Omega \, \Vb_k,
$$
where
$
\Fb_\Omega := \diag \left[ f_\omega : \omega \in \Omega \right]
$
is the matrix with $f_\omega$ on the diagonal. Let $\xb$ be a singular
vector of $\Vb_k$ for the singular value $\sigma$, that is $\| \xb \|
= 1$ and $\Vb_k \xb = \sigma \yb$ for some $\yb$ with $\| \yb \| = 1$,
then
$$
\| \Fb_{n,k} \xb \|_2 = \sigma \| \Vb_n^T \Fb_\Omega \yb \|_2 \le
\sigma  \| \Vb_n^T \Fb_\Omega \|_2 \le \sigma  \| \Vb_n^T \|_2
\max_{\omega \in \Omega} | \fb_\omega |
$$
which shows that if $\sigma$ is small relative to $\| \Vb_n^T \|_2$ or
if all entries in $f_\omega$ are small, then the smallest singular
value
$$
\sigma_{min} ( \Fb_{n,k} ) = \min_{\| \xb \| = 1, \Fb_{n,k} \xb \neq
  0} \| \Fb_{n,k} \xb \|_2
$$
of $\Fb_{n,k}$ is small as well. The first case means that the
interpolation problem is ill--conditioned, the second case means that
the coefficients are too close to zero to be relevant
numerically. Also, if we can
find some $\xb$ such that $\Fb_\Omega \Vb_k \xb$ becomes small, then
this can also lead to a small singular value. This latter situation
can even be reached when only a single coefficient is close to zero.

This is, of course, only a first, rough reasoning that shows that
Prony's method can perform quite well numerically if the perturbation
is small relative to the stability of the interpolation problem. This
can be verified by numerical experiments, see
Table~\ref{tab:NumTestPerturb} in the following section.

\section{Examples}
\label{sec:Examples}
The first example is to illustrate the basic idea of the procedure in
the simplest possible case. To that end, we set $\Omega = \{ 0,\omega
\}$, $\omega \neq 0$, so that
$$
f (\alpha) = f_0 + f_\omega e^{\omega^T \alpha},
$$
and
$$
\Fb_{n,0} = \left[ f(\alpha) : |\alpha| \le n \right].
$$
The first component of this matrix is nonzero if $f_0 \neq -f_\omega$,
otherwise there exists some $j$ such that $\omega_j \neq 0$ and then
at the unit multiindices $\epsilon_j$ the function evaluates to $f
(\epsilon_j) = f_0 ( 1 - e^{\omega} ) \neq 0$, so that, as 
mentioned before, the rank of this matrix is $1$. Hence, $\rank
\Fb_{n,0} = 1$ and $N_0 = \{ 1 \}$, the constant function is member of
the normal form space. In the next step, we already consider the matrix
$$
\Fb_{n,1} = \left[ f(\alpha), f(\alpha+\epsilon_1), \dots,
  f(\alpha+\epsilon_s) : |\alpha| \le n \right]
$$
and since
$$
f(\alpha+\epsilon_j) = f_0 + f_\omega \, e^{\omega_j} \, e^{\omega^T
  \alpha},
$$
we have $\Fb_{n,1} \pb = 0$ for $\pb = [ p_\alpha : |\alpha| \le 1 ]$
if and only if, with the abbreviations $p_j = p_{\epsilon_j}$ and
$\omega_0 = 0$,
\begin{align*}
  0 &= \left( f_0 + f_\omega e^{\omega^T \alpha} \right) p_0 +
  \sum_{j=1}^s \left( f_0 + f_\omega e^{\omega_j} \, e^{\omega^T \alpha}
  \right) p_j \\
  &= f_0 \, \sum_{j=0}^s p_j + f_\omega e^{\omega^T \alpha} \sum_{j=0}^s
  e^{\omega_j} p_j
\end{align*}
holds for all $|\alpha| \le n$. This can be rephrased as
$$
0 = \left[
  \begin{array}{cccc}
    1 & 1 & \dots & 1 \\ 1 & e^{\omega_1} & \dots & e^{\omega_s}
  \end{array}
\right] \pb
$$
which shows that the kernel has dimension $s-2$ as the above matrix
consists of the first two rows of the Fourier matrix for the
frequencies $0,\omega_1,\dots,\omega_s$ and at least one of the
$\omega_j$ is nonzero. On the other hand,
we have, for any $\pb$ with only $p_{\epsilon_j} \neq 0$ that
$$
\Fb_{n,1} \pb = \left[ f_0 + \left( f_\omega e^{\omega_j} \right) \,
  e^{\omega^T \alpha} : |\alpha| \le n \right]
= \Fb_{n,0} + f_\omega ( e^{\omega_j} - 1 ) \left[ e^{\omega^T \alpha}
  : |\alpha| \le n \right],
$$
which is linearly independent of $\Fb_{n,0}$ iff $\omega_j \neq
0$. This reflects the fact that we can choose various subspaces of
$\Pi_1$ that allow for unique degree reducing interpolation at
$Z_\Omega = \{ 1,e^\omega \}$.

Next, we report and interpret some numerical results of
the test implementation. There, we simply picked a number of random
frequencies $\Omega$ and coefficients $f_\omega$ chosen by Octave's
\texttt{rand} function as well as an estimate for $n$. Then the algorithm was
and maximal and average (Frobenius norms
of the frequency matrix and coefficient vectors) errors in the
frequencies and
coefficients were determined. This process was repeated 100 times.
Though this procedure is far from
statistically meaningful, it clearly gives a reasonable first idea on how the
algorithm behaves.

\begin{table}[htb]
  \centering
  \small
  \begin{tabular}{ccc||r||cc|cc}
    \multicolumn{3}{c||}{parameters} & & \multicolumn{2}{|c|}{average
      error} & \multicolumn{2}{|c}{max error} \\
    $s$ & \# freq. & $n$ & \# samples & coeff & freq & coeff & freq \\
    \hline
    2 & 5 & 3 & 21 & \texttt{1.36e-11} & \texttt{1.83e-09} &
    \texttt{3.51e-09} & \texttt{2.42e-07} \\
    2 & 10 & 5 & 45 & \texttt{4.94e-08} & \texttt{2.69e-06} &
    \texttt{7.30e-05} & \texttt{5.33e-04} \\
    2 & 15 & 8 & 105 & \texttt{7.06e-07} & \texttt{2.97e-04} &
    \texttt{1.47e-04} & \texttt{4.45e-02} \\
    2 & 20 & 9 & 136 & \texttt{Inf} & \texttt{Inf} & \texttt{NaN} & \texttt{NaN}
    \\
    \hline
    3 & 20 & 6 & 286 & \texttt{1.59e-08} & \texttt{1.42e-06} &
    \texttt{4.73e-05} & \texttt{8.94e-04} \\
    4 & 20 & 5 & 495 & \texttt{8.47e-12} & \texttt{4.66e-11} &
    \texttt{9.03e-09} & \texttt{3.75e-09} \\
    5 & 20 & 5 & 1287 & \texttt{1.69e-12} & \texttt{5.94e-11} &
    \texttt{1.95e-09} & \texttt{1.32e-08} \\
    \hline
    5 & 50 & 5 & 2002 & \texttt{1.11e-10} & \texttt{6.61e-10} &
    \texttt{3.17e-07} & \texttt{6.69e-08} \\
    5 & 100 & 6 & 3003 & \texttt{2.93e-09} & \texttt{1.94e-08} &
    \texttt{1.00e-05} & \texttt{1.39e-06} \\
    5 & 150 & 8 & 11628 & \texttt{1.31e-08} & \texttt{8.42e-08} &
    \texttt{5.73e-06} & \texttt{4.40e-06} 
  \end{tabular}
  \caption{Numerical tests with real frequencies. The number of
    samples is the generic value $\dim \Pi_{n+d(X)+1}$ and definitely
    not minimal.}
  \label{tab:NumTestsReal}
\end{table}

Table~\ref{tab:NumTestsReal} records what happens if all parameters
are chosen as \emph{real numbers} in $[0,1]$. Already for 20 frequencies in two
variables the problem becomes numerically unsolvable due to the ill
conditioning of the associated matrices.
A closer inspection of $\Fb_{n,k}$ explains why:
some entries in the matrix already become very large which is also
reflected in the distribution of the ``meaningful'' singular values of
that matrix. The largest one gets huge, even around $10^{30}$, while
the smallest one is around $10^{-10}$ and becomes almost
indistinguishable from the the largest one corresponding to the kernel
of the matrix. In some cases even the number of frequencies is not
determined correctly due to that effect.

As the table shows, things become better if the number of variables is
increased. This is a typical effect in the numerical stability of
multivariate polynomials, from evaluation
\cite{deBoor00,CzekanskySauer2015,PenaSauer00} to interpolation
\cite{Sauer95}: the stability of polynomial algorithms often depends on
the \emph{total degree} of the polynomials and not so much on the
number of coefficients involved. In fact, the experiments show, that
in 5 variables one still obtains quite reasonable results with even 150
frequencies.

\begin{table}[htb]
  \centering
  \small
  \begin{tabular}{ccc||cc|cc}
    \multicolumn{3}{c||}{parameters} & \multicolumn{2}{|c|}{average
      error} & \multicolumn{2}{|c}{max error} \\
    $s$ & \# freq. & $n$ & coeff & freq & coeff & freq \\
    \hline
    2 & 10 & 5 & \texttt{1.3476e-14} & \texttt{3.4744e-13} &
    \texttt{6.0290e-12} & \texttt{1.3724e-10} \\
    2 & 20 & 7 & \texttt{2.5148e-14} & \texttt{1.2420e-12} &
    \texttt{3.2103e-11} & \texttt{7.8847e-10} \\
    2 & 50 & 11 & \texttt{5.9357e-14} & \texttt{3.9721e-12} &
    \texttt{1.1845e-10} & \texttt{5.5214e-09} \\
    2 & 100 & 15 & \texttt{9.0480e-13} & \texttt{5.7684e-11} &
    \texttt{8.8308e-09} & \texttt{2.0468e-07} \\
    \hline
    5 & 100 & 6 & \texttt{2.3796e-15} & \texttt{4.3794e-15} &
    \texttt{3.1431e-11} & \texttt{3.2918e-14} \\
    5 & 150 & 8 & \texttt{2.3954e-15} & \texttt{4.7773e-15} &
    \texttt{1.1702e-11} & \texttt{6.9726e-14} \\
  \end{tabular}
  \caption{Numerical tests with purely imaginary frequencies}
  \label{tab:NumTestsImag}
\end{table}

One potential application and a main motivation for Prony's method is
the reconstruction of frequencies of functions with a sparse Fourier
transform which means that $\Omega \subset i \RR^s$ is a set of
\emph{purely imaginary} frequencies. Table~\ref{tab:NumTestsImag} shows
that in this situation the method behaves significantly better and
provides a remarkable amount of numerical stability now. The
reason is that the matrices $\Fb_{n,k}$ now only contain complex
numbers of modulus $1$ and the spectral values are much better
distributed. Here the numerical stability of orthogonal projections,
which are the core ingredient for the Linear Algebra within the
algorithm can seemingly be exploited.

\begin{table}[htb]
  \centering
  \small
  \begin{tabular}{ccc||cc|cc}
    \multicolumn{3}{c||}{parameters} & \multicolumn{2}{|c|}{average
      error} & \multicolumn{2}{|c}{max error} \\
    $s$ & \# freq. & fail & coeff & freq & coeff & freq \\
    \hline
    2 & 3 & 0 & \texttt{5.1668e-06} & \texttt{1.6241e-04} &
    \texttt{0.0023195} & \texttt{0.0243550} \\
    2 & 4 & 0 & \texttt{2.8912e-06} & \texttt{2.9318e-03} &
    \texttt{9.9505e-04} & \texttt{5.8547e-01} \\
    2 & 5 & 5 & \texttt{3.1901e-05} & \texttt{2.1641e-02} &
    \texttt{0.0058405} & \texttt{1.8753920} \\
    2 & 10 & 100 & \texttt{$\emptyset$} & \texttt{$\emptyset$} &
    \texttt{$\emptyset$} & \texttt{$\emptyset$} \\
    \hline
    3 & 5 & 5 & \texttt{1.4484e-05} & \texttt{2.8744e-02} &
    \texttt{0.0016677} & \texttt{1.5547492} \\
    3 & 4 & 14 & \texttt{2.1197e-05} & \texttt{1.2439e-01} &
    \texttt{4.2678e-03} & \texttt{1.8590e+01} \\    
    3 & 5 & 24 & \texttt{3.8699e-04} & \texttt{3.9617e-02} &
    \texttt{0.057250} & \texttt{1.326782} \\    
    \hline \hline
    2 & 5 & 0 & \texttt{2.1330e-12} & \texttt{3.6481e-11} &
    \texttt{3.9225e-10} & \texttt{4.5345e-09} \\
    2 & 10 & 0 & \texttt{3.1867e-06} & \texttt{5.9222e-03} &
    \texttt{0.0018326} & \texttt{1.7269961} \\
    2 & 20 & 11 & \texttt{8.0145e-06} & \texttt{4.2270e-03} &
    \texttt{0.0071972} & \texttt{1.0316399} \\
    \hline
    3 & 10 & 1 & \texttt{0.0025437} & \texttt{0.0206981} &
    \texttt{1.0404} & \texttt{4.3874}
  \end{tabular}
  \caption{Numerical tests with points on a line through origin. Top
    part: Real frequencies, bottom part: purely imaginary ones.}
  \label{tab:NumTestsLine}
\end{table}

Things change dramatically when the points in $Z_\Omega$ are not in
general position. The extremal case is that all points are on a line,
which results in $\deg R$ assuming its maximal value $\#
\Omega-1$. This situation is considered in
Table~\ref{tab:NumTestsLine}, where frequencies of the form
\begin{equation}
  \label{eq:FreqLinePoints}
  \omega = \widetilde \omega + i \lambda_\omega \left[
    \begin{array}{c}
      1 \\ \vdots \\ 1
    \end{array}
  \right], \qquad \lambda_\omega \in \RR,  
\end{equation}
were chosen as then
$$
z_\omega = e^{\omega} = e^{\widetilde \omega} \, e^{i s
  \lambda_\omega}
$$
are all points on the line through the origin and $e^{\widetilde \omega}$.
Since in this case the guess for $n$ has to be the maximal value $n =
\# \Omega$, we
use the third column to count the number of \emph{fails} where the
number of reconstructed frequencies was not correct. The table shows
that again the degree is relevant and that, in contrast to points in
general position, things do not get better but worse with increasing
number of variables. In two variables the method is again surprisingly
stable for purely imaginary frequencies but in more than three
variables the eigenvalue routines crash as several matrices in the
process become severely ill--conditioned. This is to be expected as a
one--dimensional subspace has to be found in spaces of dimension ${k+s
  \choose k}$. Things become even worse when the frequencies from
(\ref{eq:FreqLinePoints}) are slightly perturbed as then the are
generic but the systems are almost arbitrarily ill--conditioned.

\begin{table}[htb]
  \centering
  \small
  \begin{tabular}{cccc||cc|cc}
    \multicolumn{4}{c||}{parameters} & \multicolumn{2}{|c|}{average
      error} & \multicolumn{2}{|c}{max error} \\
    $s$ & \# freq. & $\varepsilon$ & fail & coeff & freq & coeff & freq \\
    \hline
    5 & 100 & $10^{-5}$ & 0 & \texttt{3.7885e-08} & \texttt{1.1462e-06} &
    \texttt{2.0235e-06} & \texttt{1.3860e-05} \\
    5 & 100 & $10^{-7}$ & 0 & \texttt{3.7916e-10} & \texttt{1.1133e-08} &
    \texttt{2.1059e-08} & \texttt{7.8396e-08} \\
    5 & 100 & $10^{-10}$ & 0 & \texttt{3.7221e-13} & \texttt{1.1200e-11} &
    \texttt{1.5896e-11} & \texttt{1.6209e-10} \\
    \hline
    3 & 100 & $10^{-4}$ & 27 & \texttt{0.0023822} & \texttt{0.0791873} &
    \texttt{5.3002} & \texttt{148.6645} \\
    4 & 100 & $10^{-4}$ & 1 & \texttt{1.2563e-04} & \texttt{5.9020e+01} &
    \texttt{1.1638e+00} & \texttt{2.9213e+05} \\
    5 & 100 & $10^{-4}$ & 2 & \texttt{3.7969e-07} & \texttt{1.1228e-05} &
    \texttt{6.8484e-06} & \texttt{7.1493e-05} \\
    10 & 100 & $10^{-4}$ & 0 & \texttt{1.2672e-07} & \texttt{3.7955e-06} &
    \texttt{7.7848e-07} & \texttt{1.4004e-05} 
  \end{tabular}
  \caption{Numerical tests with $100$ random imaginary frequencies and
    random \emph{absolute} perturbation. The coefficients are chosen
    randomly in $\pm [1,2]$, therefore no fails occur due to small coefficients.
    Acciddential fails are listed.}
  \label{tab:NumTestPerturb}
\end{table}

The last example, whose results are shown in
Table~\ref{tab:NumTestPerturb}, considers the case of randomly
perturbed input data. The coefficients were chosen randomly in
$[-2,-1] \cup [1,2]$ to avoid instability due to zero
coefficients. A fail in this table means that the structure of the
problem was not recognized properly, i.e., $\# \Omega$ was not
detected correctly, or that the Vandermonde matrix used to determine
the coefficients became singular; sometimes the results were even
\texttt{Inf} or \texttt{NaN}. The threshold on the singular values was
adapted to the perturbation level $\varepsilon$ as $d_n d_k \varepsilon$.
The results are in accordance with the earlier observations that the
algorithm performs surprising well and that the numerical stability
improves with the number of variables. However, it should be mentioned
that $\varepsilon = 10^{-3}$ consistently provided fails as then the
noise level exceeded the smallest singular value of the unperturbed problem.
If, on the other hand, coefficients were chosen randomly in $[-1,1]$,
small coefficients lead to a larger number of fails and a much worse
overall reconstruction quality.
    
These tests are only snapshots and therefore only of restricted
practical relevance and do not give really reliable information. Nevertheless,
they show that the method works quite
well in principle. Crucial points that need to be studied and adapted
further are
the numerical computation of the rank of the $\Fb_{n,k}$ which at the
moment only uses Octave's standard \texttt{rank} procedure
based on an SVD and thresholding. Better adapted methods that take
into account the construction of the $F_{n,k}$ and also consider other
rank revealing factorizations are currently under investigation.

In addition, the algorithms are quite fast due to their numerical
nature. For example, when reconstructing $100$ frequencies in $10$
variables (second to last example in Table~\ref{tab:NumTestPerturb}),
the procedure determines $901$ polynomials of degree $4$ in $10$
variables and computes their $100$ common zeros together with the
associated coefficients (which only means solving an additional $100 \times 100$
system) in about 47 seconds on a standard PC.

\section{Sparse polynomials}
\label{sec:SparsePoly}
An immediate byproduct of Prony's method is to determine sparse
polynomials, \emph{oligonomials} or \emph{fewnomials},
cf. \cite{sturmfels02:_solvin_system_polyn_equat}, from sampling. Here
we look for a polynomial
\begin{equation}
  \label{eq:oligonom}
  f(z) = \sum_{\alpha \in A} f_\alpha \, z^\alpha, \qquad A \subset \NN_0^d,
\end{equation}
where $A$ is assumed to be of small cardinality but not necessarily to
consist only of small multiindices. Again, the task is to determine
$A$ and $f_\alpha$ from measurements of $f$. Let $X \in \CC^{s \times
  s}$ be an arbitrary nonsingular matrix, then we consider the matrices
$$
\Fb_n = \left[ f \left( e^{X (\beta + \gamma) } \right) : |\beta|,|\gamma|
\le n \right], \qquad n \in \NN_0.
$$
Since
$$
f ( e^{X \beta} ) = \sum_{\alpha \in A} f_\alpha e^{\beta^T X \alpha}
= \sum_{\alpha \in A} f_\alpha \left( e^{X^T \alpha} \right)^\beta
$$
we are in the Prony situation with $\omega = \omega(\alpha) = X^T
\alpha$ and $f_\omega = f_\alpha$. Hence, after reconstructing
$\Omega$, the exponent set $A$ can be obtained as $A = X^{-T}
\Omega$, rounded to the next integer. Of course, rounding can even
compensate small numerical errors occurring in the approximation process.
The choice of $X$ can be used to improve the conditioning of
the problem. With the experiences from the previous section, a purely
imaginary choice of $X$ could be helpful. Note, however, that this of
course changes the sampling grid.

Reconstruction of sparse polynomials has been considered a lot since
the algorithm by Ben-Or and Tiwari \cite{ben-or88} which uses a
\emph{univariate} Prony method on data $f(k) = f \left(
\omega_1^k,\dots,\omega_s^k \right)$ where $\omega_1,\dots,\omega_s$
are coprime and reconstructs the exponents by divisibility. 
Closest to the approach here is the 
generalization in \cite{giesbrecht09:_symbol} which use some unit
roots of the form $\omega^{2\pi/p}$, but use, in the spirit of
\cite{ben-or88}, a univariate Prony and a reconstruction by means of
the Chinese remainder theorem.

The obvious difference to these methods is that here a multivariate
approach is used an that the total degree of the polynomials used in
the method is usually much smaller than $\# A$ as in the univariate
case. The examples of Section~\ref{sec:Examples} indicate that these
will lead to better numerical behavior so that fast numerical methods
can be used instead of symbolic ones and the final result is obtained
by rounding $X^{-T} \Omega$ to the next integer. A detailed study of these
question, the choice of an optimal matrix $X$ and quantitative
estimates are not in the scope of this paper, however.

\section{Summary}
We have shown that frequencies and coefficients of
(\ref{eq:PronySetup}) can be reconstructed from sampling the function
on the integer lattice
\begin{equation}
  \label{eq:LatticeDef}
  \Gamma_n := \{ \alpha + \beta : |\alpha|,|\beta| \le n \},  
\end{equation}
where $n$ is such that $n \ge \deg \Pi/I_\Omega$, where $I_\Omega$ is
the ideal of all polynomials vanishing on $Z_\Omega = e^\Omega$. In
contrast to the univariate case, this is a structural quantity that
depends on the geometry of $Z_\Omega$. In particular, it is not to be
expected that these sampling sets are minimal unless $\# \Omega = {n+s
  \choose s}$ and the points in $Z_\Omega$ are in general position as
then and only then the number of parameters and measurement points
coincide. On the other hand, for $s=1$ these conditions are always
fulfilled. And even if there were smaller sampling sets, these would
depend on the unknown geometry of $Z_\Omega$.

In addition, we showed that the extended method can be implemented by
using standard procedures of Numerical Linear Algebra and runs with
reasonable numerical stability. There is still room to adapt some of
the tools and probably make use of higher precision arithmetic if
needed. Such additions should be designed in the context of a
particular application, however.

All the methods shown here are based on the numerical realization of a
purely algebraic algorithm that in principle solves Prony's problem in
an arbitrary number of variables and for an arbitrary number of
frequencies. It goes without saying that a careful and quantitative
analysis of the algorithm with respect to minimality, complexity and
numerical accuracy is a reasonable next step once the underlying
theory is understood. It would also be extremely interesting to
compare this algorithm with other multivariate approaches like the
projection method. Some of these issues are currently under
investigation.


\bibliographystyle{spmpsci}      
\bibliography{../bibls/cagd,../bibls/wavelets,../bibls/approx,../bibls/books,../bibls/interpol,../bibls/subdiv,../bibls/ideal,../bibls/numerik,../bibls/compalg}

%
%

\end{document}